\newtheoremstyle{slplain}
{.1\baselineskip\@plus.2\baselineskip\@minus.2\baselineskip}
{.1\baselineskip\@plus.2\baselineskip\@minus.2\baselineskip}
{}
{}
{\bfseries}
{.}
{ }
{}
\theoremstyle{slplain}
\titlespacing{\section}{0pt}{*0.45}{*0.45}
\titlespacing{\subsection}{0pt}{*0.45}{*0.45}
\titlespacing{\subsubsection}{0pt}{*0.45}{*0.45}
\newcommand{\la}{\left \langle}
\newcommand{\ra}{\right\rangle}
\newcommand{\norm}[1]{\left\lVert #1 \right\rVert}
\newtheorem{theorem}{Theorem}[section]
\newtheorem{corollary}[theorem]{Corollary}
\newtheorem{lemma}[theorem]{Lemma}
\theoremstyle{definition}
\newtheorem{assumption}[theorem]{Assumption}
\theoremstyle{remark}
\newtheorem{remark}[theorem]{Remark}
\numberwithin{equation}{section}
\DeclarePairedDelimiter\floor{\lfloor}{\rfloor}
\title{Scaling Limits of Neural Networks with the Xavier Initialization and Convergence to a Global Minimum}
\author{Justin Sirignano\footnote{Department of Industrial \& Systems Engineering, University of Illinois at Urbana-Champaign, Urbana, E-mail: jasirign@illinois.edu} \phantom{.}  and Konstantinos Spiliopoulos\footnote{Department of Mathematics and Statistics, Boston University, Boston, E-mail: kspiliop@math.bu.edu}
\thanks{K.S. was partially supported by the National Science Foundation (DMS 1550918)}\\
}
\begin{document}

\maketitle



\begin{abstract}
We analyze single-layer neural networks with the Xavier initialization in the asymptotic regime of large numbers of hidden units and large numbers of stochastic gradient descent training steps. The evolution of the neural network during training can be viewed as a stochastic system and, using techniques from stochastic analysis, we prove the neural network converges in distribution to a random ODE with a Gaussian distribution. The limit is completely different than in the typical mean-field results for neural networks due to the $\frac{1}{\sqrt{N}}$ normalization factor in the Xavier initialization (versus the $\frac{1}{N}$ factor in the typical mean-field framework). Although the pre-limit problem of optimizing a neural network is non-convex (and therefore the neural network may converge to a local minimum), the limit equation minimizes a (quadratic) convex objective function and therefore converges to a global minimum. Furthermore, under reasonable assumptions, the matrix in the limiting quadratic objective function is positive definite and thus the neural network (in the limit) will converge to a global minimum with zero loss on the training set.

\end{abstract}

\section{Introduction}

Consider a single-layer neural network with the Xavier initialization:

\begin{eqnarray}
g^{N}(x; \theta) = \frac{1}{\sqrt{N}} \sum_{i=1}^N C^i \sigma( W^i \cdot  x),\nonumber
\end{eqnarray}
where $C^i \in \mathbb{R}$, $W^i \in \mathbb{R}^d$, $x \in \mathbb{R}^d$, and $\sigma(\cdot): \mathbb{R} \rightarrow \mathbb{R}$.  The number of hidden units is $N$ and the output is scaled by a factor $\frac{1}{\sqrt{N}}$ (the widely-used Xavier initialization, see \cite{Xavier}).

The objective function is
\begin{eqnarray}
\mathcal{L}^{N}(\theta) = \frac{1}{2}\mathbb{E} \bigg{[} (Y - g^N(X; \theta) )^2 \bigg{]},\nonumber
\end{eqnarray}
where the data $(X,Y) \sim \pi(dx,dy)$, $Y \in \mathbb{R}$, and the parameters $\theta = (C^1, \ldots, C^N, W^1, \ldots, W^N ) \in \mathbb{R}^{N \times (1 + d) }$. For notational convenience, we may refer to $g^N(x; \theta)$ as $g^N(x)$ in our analysis below.

The model parameters $\theta$ are trained using stochastic gradient descent. The parameter updates are given by:

\begin{eqnarray}
C^i_{k+1} &=& C^i_k + \frac{ \alpha^N }{\sqrt{N}} ( y_k - g_k^N(x_k) ) \sigma(W^i_k \cdot x_k ), \notag \\
W^i_{k+1} &=& W^i_k + \frac{ \alpha^N}{\sqrt{N}}  ( y_k - g_k^N(x_k) ) C^i_k \sigma'(W^i_k \cdot x_k ) x_k, \notag \\
g_k^N(x) &=&  \frac{1}{\sqrt{N}} \sum_{i=1}^N C^i_k \sigma( W^i_k \cdot x),
\label{SGDupdates}
\end{eqnarray}
for $k = 0, 1, \ldots, T N$ where $T > 0$. $\alpha^N$ is the learning rate (which may depend upon $N$). The data samples are $(x_k, y_k)$ are i.i.d. samples from a distribution $\pi(dx,dy)$.

We impose the following assumption.
\begin{assumption} \label{A:Assumption1} We have that
\begin{itemize}
\item The activation function $\sigma\in C^{2}_{b}(\mathbb{R})$, i.e. $\sigma$ is twice continuously differentiable and bounded.

\item The randomly initialized parameters $(C_0^i, W_0^i)$ are i.i.d., mean-zero random variables with a distribution $ \mu_0(dc, dw)$.
\item The random variable $C_0^i$ has compact support and $\la \norm{w}, \mu_0 \ra < \infty$.
\item The sequence of data samples $(x_k, y_k)$ is i.i.d. from the probability distribution $\pi(dx,dy)$.
\item There is a fixed dataset of $M$ data samples $(x^{(i)}, y^{(i)} )_{i=1}^M$ and therefore $\pi(dx,dy) = \displaystyle \frac{1}{M} \sum_{i=1}^M \delta_{(x^{(i)}, y^{(i)}) }(dx,dy)$.

\end{itemize}
\end{assumption}
Note that the last assumption also implies that $\pi(dx,dy)$ has compact support.

We will study the limiting behavior of the network output $g^N_k(x)$ for $x \in \mathcal{D} = \{ x^{(1)}, \ldots, x^{(M)} \}$ as the number of hidden units $N$ and stochastic gradient descent steps $TN$ simultaneously become large. The network output converges in distribution to the solution of a random ODE as $N \rightarrow \infty$.

\subsection{Main Results} \label{MainResults}

Define the empirical measure
\begin{eqnarray}
\nu_k^N = \frac{1}{N} \sum_{i=1}^N \delta_{C^i_k, W^i_k}.\nonumber
\end{eqnarray}

Note that the neural network output can be written as the inner-product
\begin{eqnarray}
g_k^N(x) =  \la c \sigma(w \cdot x), \sqrt{N} \nu_k^N \ra.\nonumber
\end{eqnarray}

Due to Assumption \ref{A:Assumption1}, as $N \rightarrow \infty$ and for $x \in \mathcal{D}$,
\begin{eqnarray}
g_0^N(x) \overset{d} \rightarrow \mathcal{G}(x),
\label{Gdefinition}
\end{eqnarray}
where $\mathcal{G} \in \mathbb{R}^M$ is a Gaussian random variable. We also of course have that

\begin{eqnarray}
\nu_0^N \overset{p} \rightarrow \nu_0 \equiv \mu_0.\nonumber
\end{eqnarray}

Define the scaled processes
\begin{eqnarray}
h_t^N &=& g_{\floor*{N t}}^N, \notag \\
\mu_t^N &=& \nu_{\floor*{N t}}^N,\nonumber
\end{eqnarray}
where $g_k^N = \bigg{(} g_k^N(x^{(1)}), \ldots, g_k^N(x^{(M)}) \bigg{)}$, $h_t^N(x) = g_{\floor*{N t}}^N(x)$, and $h_t^N = \bigg{(} h_t^N(x^{(1)}), \ldots, h_t^N(x^{(M)}) \bigg{)}$.

We will study convergence in distribution of the random process $(\mu_t^N, h_t^N)$ as $N \rightarrow \infty$ in the space $D_E([0,T])$ where $E = \mathcal{M}(\mathbb{R}^{1+d}) \times \mathbb{R}^M $.  $D_E([0,T])$ is the Skorokhod space and $\mathcal{M}(S)$ is the space of probability measures on $S$.

The main contribution of our work is a rigorous proof that a neural network with the Xavier initialization and trained with stochastic gradient descent converges in distribution to a random ODE as the number of units and training steps become large. In addition, our convergence analysis will also address several interesting questions:
\begin{itemize}
\item Our results provide a rigorous convergence guarantee for the Xavier initialization (i.e., the $\frac{1}{\sqrt{N}}$ normalization factor), which is almost universally used in deep learning models. \emph{A priori} it is unclear if the neural network $g_k^N(x)$ will converge as $N \rightarrow \infty$ since, for $k > 0$, the $C^i \sigma( W^i \cdot  x)$ is correlated with $C^j \sigma( W^j  \cdot x)$ and therefore a limit may not exist. If a limit did not exist, this would imply that the neural network model could have poor numerical behavior for large $N$. We prove that a limit does exist.
\item Although the pre-limit problem of optimizing a neural network is non-convex (and therefore the neural network may converge to a local minimum), the limit equation minimizes a quadratic objective function.
\item We show that the matrix in the limiting quadratic objective function is positive definite, and therefore the neural network (in the limit) will converge to a global minimum with zero loss on the training set.
\end{itemize}

Convergence to a global minimum for a neural network has been recently proven in \cite{JasonLee}, \cite{Du1}, and \cite{UCLA2018}. Our work contributes to this growing literature by showing that convergence to a global minimum is a simple consequence of the mean-field limit for neural networks. A detailed discussion of these papers and other related literature is provided in Section \ref{LitReview}. \\

Our main results are presented below. \\

\begin{theorem} \label{MainTheorem1}
The process $(\mu_t^N, h_t^N)$ converges in distribution in the space $D_E([0,T])$ as $N \rightarrow \infty$ to $(\mu_t, h_t)$ which satisfies, for every $f \in C_2^b( \mathbb{R}^{1+d})$, the random ODE

\begin{eqnarray}
h_t(x) &=& h_0(x) +   \alpha \int_{\mathcal{X} \times \mathcal{Y}} ( y - h_t(x') ) \la \sigma(w \cdot x ) \sigma(w \cdot x'), \mu_t \ra  \pi(dx',dy) dt \notag \\
&+& \alpha \int_{\mathcal{X} \times \mathcal{Y}} ( y - h_t(x') )   \la c^2  \sigma'(w \cdot x')  \sigma'(w \cdot  x) x\cdot x', \mu_t \ra \pi(dx',dy) dt, \notag \\
h_0(x) &=& \mathcal{G}(x), \notag \\
\la f, \mu_t \ra &=& \la f, \mu_0 \ra.
\label{EvolutionEquationIntroductionXavier}
\end{eqnarray}

\end{theorem}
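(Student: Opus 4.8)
\emph{Plan.} I would prove the theorem by the standard weak-convergence route: derive a pre-limit evolution equation for $(\mu^N_t,h^N_t)$, establish uniform a priori bounds and tightness of the laws in $D_E([0,T])$, characterize every subsequential limit as a solution of \eqref{EvolutionEquationIntroductionXavier}, and conclude by uniqueness for that (linear) limit system. The structural feature driving everything is the scaling: with the learning rate normalized so that $N\alpha^N\to\alpha$ (as the appearance of a single $\alpha$ in the limit forces), the updates \eqref{SGDupdates} move each $C^i_k,W^i_k$ by only $O(N^{-3/2})$ per step, hence by $O(N^{-1/2})$ over all $\lfloor Nt\rfloor$ steps. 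Consequently the measure component is essentially frozen — $\mu^N_t$ stays within $O(N^{-1/2})$ of $\mu^N_0$ in any bounded-Lipschitz metric while $\mu^N_0\to\mu_0$ by the law of large numbers — so the limit satisfies $\langle f,\mu_t\rangle=\langle f,\mu_0\rangle$, and the real content sits in the $\mathbb{R}^M$-valued component $h^N_t$.

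\textbf{Pre-limit equation, a priori bounds, tightness.} Taylor-expanding $g^N_{k+1}(x)-g^N_k(x)$ along \eqref{SGDupdates}, the first-order terms collapse into $\alpha^N(y_k-g^N_k(x_k))\,\langle K_{x_k,x},\nu^N_k\rangle$, where $K_{x',x}(c,w):=\sigma(w\cdot x')\sigma(w\cdot x)+c^2\sigma'(w\cdot x)\sigma'(w\cdot x')\,x^{\top}x'$, and the second-order remainder is $O((\alpha^N)^2/N)$ per step, hence negligible after summation. Splitting the drift into its conditional mean given $\mathcal F_k=\sigma((x_j,y_j):j<k)$ — which replaces $(x_k,y_k)$ by integration against $\pi$ — plus a martingale difference $D_k$, one gets $h^N_t(x)=g^N_0(x)+\alpha^N\sum_{k<\lfloor Nt\rfloor}\int_{\mathcal{X}\times\mathcal{Y}}(y'-g^N_k(x'))\langle K_{x',x},\nu^N_k\rangle\,\pi(dx',dy')+M^N_t(x)+\mathcal{E}^N_t(x)$. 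A bootstrap/stopping-time argument — the increment bound $|g^N_{k+1}(x)-g^N_k(x)|\le \tfrac{C}{N}(|y_k|+\max_{x'}|g^N_k(x')|)+o(N^{-1})$ holds while $\max_i|C^i_k|$ is controlled, which itself holds while the $g^N$'s are — together with discrete Gronwall and $\mathbb{E}[\max_x|g^N_0(x)|^2]\le C$ (central limit theorem plus compact support of $C^i_0$) gives $\sup_N\mathbb{E}[\sup_{k\le TN}\max_x|g^N_k(x)|^2]<\infty$. The quadratic variation of $M^N_\cdot(x)$ is $O(1/N)$ (here the extra $1/N$ from the learning-rate scaling is essential), so $M^N\to 0$ uniformly in probability, and $\mathcal{E}^N\to 0$. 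Uniform integrability of $\langle\|w\|,\nu^N_k\rangle$ (from $\langle\|w\|,\mu_0\rangle<\infty$ and near-freezing of the $W^i$) gives compact containment of $\{\mu^N_t\}$ in $\mathcal{M}(\mathbb{R}^{1+d})$, while the bounded drift of $h^N$ yields the Aldous--Kurtz oscillation estimate; together these give tightness of $\{(\mu^N_\cdot,h^N_\cdot)\}$ in $D_E([0,T])$ with all limit points supported on continuous paths.

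\textbf{Identification.} Take a subsequential limit $(\mu_\cdot,h_\cdot)$ and, by Skorokhod representation, assume a.s. convergence in $D_E$. For $f\in C^2_b(\mathbb{R}^{1+d})$, $|\langle f,\mu^N_t\rangle-\langle f,\mu^N_0\rangle|\le \mathrm{Lip}(f)\,O(N^{-1/2})\to 0$ and $\langle f,\mu^N_0\rangle\to\langle f,\mu_0\rangle$, so $\langle f,\mu_t\rangle=\langle f,\mu_0\rangle$ for all $t$; as $C^2_b$ is measure-determining, $\mu_t=\mu_0$. For $h$: $g^N_0(x)\to\mathcal{G}(x)$, $M^N,\mathcal{E}^N\to 0$, and rewriting the drift as $N\alpha^N\int_0^{\lfloor Nt\rfloor/N}\int_{\mathcal{X}\times\mathcal{Y}}(y'-h^N_s(x'))\langle K_{x',x},\mu^N_s\rangle\,\pi(dx',dy')\,ds$ and using joint convergence $(h^N,\mu^N)\to(h,\mu)$, continuity of $(\xi,m)\mapsto\int(y'-\xi(x'))\langle K_{x',x},m\rangle\,\pi(dx',dy')$ on the relevant set (the $c$-marginal of $\mu_0$, and approximately of $\mu^N_s$, has compact support, so $K_{x',x}$ acts there as a bounded continuous function), $N\alpha^N\to\alpha$, $\lfloor Nt\rfloor/N\to t$, and dominated convergence, one obtains
\[ h_t(x)=\mathcal{G}(x)+\alpha\int_0^t\!\!\int_{\mathcal{X}\times\mathcal{Y}}(y'-h_s(x'))\Big[\langle\sigma(w\cdot x)\sigma(w\cdot x'),\mu_0\rangle+\langle c^2\sigma'(w\cdot x)\sigma'(w\cdot x')\,x^{\top}x',\mu_0\rangle\Big]\pi(dx',dy')\,ds, \]
which is exactly \eqref{EvolutionEquationIntroductionXavier} in integrated form with $\mu_s\equiv\mu_0$; continuity of $h$ makes it a genuine ODE.

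\textbf{Uniqueness and the hard part.} With $\mu_t\equiv\mu_0$ frozen, the $h$-equation is a linear constant-coefficient ODE on $\mathbb{R}^M$, $\dot h_t=\alpha(b-Ah_t)$ with $A$ the symmetric kernel matrix built from the two $\mu_0$-averages and $b=Ay$, so it has a unique solution from the random initial value $\mathcal{G}$; hence every subsequential limit has the same law and the whole sequence converges in distribution to $(\mu_0,h_\cdot)$. I expect the main obstacle to be the identification step — converting $\tfrac1N\sum_{k<\lfloor Nt\rfloor}$ of a function of the i.i.d. data $(x_k,y_k)$ into $\int_0^t\!\int\pi$ while the state $(g^N_k,\nu^N_k)$ it is paired against is itself a functional of the past data. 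This is precisely what the conditional-mean/martingale decomposition handles: the martingale part vanishes because its quadratic variation is $O(1/N)$, and the predictable part is a good Riemann-sum approximation once the a priori bounds and the near-constancy of $\nu^N_k$ are secured. A secondary nuisance is the measure-valued component — the non-compactly-supported $w$ in the topology of $\mathcal{M}(\mathbb{R}^{1+d})$ and the growth of $K_{x',x}$ in $c$ — controlled via $\langle\|w\|,\mu_0\rangle<\infty$, compact support of the $C$-marginal, and the $O(N^{-1/2})$ parameter movement.
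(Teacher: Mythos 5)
Your proposal is correct and follows essentially the same route as the paper: pre-limit evolution via Taylor expansion of the SGD updates with a drift/martingale decomposition, a priori bounds on $|C_k^i|$, $\mathbb{E}\|W_k^i\|$, and $\mathbb{E}[|g_k^N(x)|^2]$ via discrete Gronwall, compact containment plus a modulus-of-continuity estimate for tightness in $D_E([0,T])$, identification of subsequential limits by sending the martingale and $O(N^{-1/2})$ error terms to zero, and uniqueness of the frozen-measure linear ODE on $\mathbb{R}^M$ to upgrade subsequential to full convergence. The only cosmetic difference is in the identification step, where the paper shows $\mathbb{E}_{\pi^N}[F(\mu,h)]\to 0$ for a suitably chosen continuous, bounded functional $F$ built from products of test functions evaluated at earlier times, rather than invoking Skorokhod representation and passing to the limit term by term as you do; both are standard and equivalent in this setting.
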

\begin{proof}
See Sections \ref{RelativeCompactness}, \ref{Identification}, and \ref{ProofOfConvergence}.
\end{proof}

Recall that $\mathcal{G} \in \mathbb{R}^M$ is a Gaussian random variable; see equation (\ref{Gdefinition}). In addition, note that $\bar \mu_t$ in the limit equation (\ref{EvolutionEquationIntroductionXavier}) is a constant, i.e. $\mu_t = \mu_0$ for $t \in [0,T]$. Therefore, (\ref{EvolutionEquationIntroductionXavier}) reduces to

\begin{eqnarray}
h_t(x) &=& h_0(x) +   \alpha  \int_0^t \int_{\mathcal{X} \times \mathcal{Y}} ( y - h_s(x') ) \la \sigma(w \cdot x ) \sigma(w \cdot x'), \mu_0 \ra  \pi(dx',dy) ds \notag \\
&+& \alpha \int_0^t \int_{\mathcal{X} \times \mathcal{Y}} ( y - h_s(x') )   \la c^2  \sigma'(w \cdot x')  \sigma'(w \cdot x) x\cdot x', \mu_0 \ra \pi(dx',dy) ds, \notag \\
h_0(x) &=& \mathcal{G}(x).
\label{LimitEquation2}
\end{eqnarray}

Since (\ref{LimitEquation2}) is a linear equation in $C_{\mathbb{R}^M}([0,T])$, the solution $h_t$ is unique. \\

To better understand (\ref{LimitEquation2}), define the matrix $A \in \mathbb{R}^{M \times M}$ where
\begin{eqnarray}
A_{x, x'} = \frac{\alpha}{M} \la \sigma(w \cdot x ) \sigma(w \cdot x'), \mu_0 \ra   +  \frac{\alpha}{M}    \la c^2  \sigma'(w \cdot x')  \sigma'(w \cdot x) x\cdot x', \mu_0 \ra,\nonumber
\end{eqnarray}
where $x, x' \in \mathcal{D}$. $A$ is finite-dimensional since we fixed a training set of size $M$ in the beginning.

Then, (\ref{LimitEquation2}) becomes
\begin{eqnarray}
d h_t &=& A  \bigg{(} \hat Y -  h_t \bigg{)} dt, \notag \\
h_0 &=& \mathcal{G},\nonumber
\end{eqnarray}
where $\hat Y = ( y^{(1)}, \ldots,  y^{(M)} )$.

Therefore, $h_t$ is the solution to a continuous-time gradient descent algorithm which minimizes a quadratic objective function.
\begin{eqnarray}
\frac{d h_t}{d t} &=& - \frac{1}{2} \nabla_h J(\hat Y, h_t), \notag \\
J( y, h) &=& \big{(} y -  h \big{)}^{\top} A \big{(} y -  h \big{)} \bigg{]},  \notag \\
h_0 &=& \mathcal{G}.
\nonumber
\end{eqnarray}

Therefore, even though the pre-limit optimization problem is non-convex, the neural network's limit will minimize a quadratic objective function.

An interesting question is whether $h_t \rightarrow \hat Y$ as $t \rightarrow \infty$. That is, in the limit of large numbers of hidden units and many training steps, does the neural network model converge to a global minimum with zero training error. Theorem \ref{ZeroTrainingError} shows that $h_t \rightarrow \hat Y$ as $t \rightarrow \infty$ if $A$ is positive definite. Corollary \ref{PositiveDefinite} proves that, under reasonable hyperparameter choices and if the data samples are in distinct directions (see \cite{yIto}), $A$ will be positive definite. \\

\begin{theorem} \label{ZeroTrainingError}
If $A$ is positive definite, then
\begin{eqnarray}
h_t \rightarrow \hat Y \phantom{....} \textrm{as} \phantom{....} t \rightarrow \infty.\nonumber
\end{eqnarray}

\end{theorem}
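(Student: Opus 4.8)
The plan is to solve the linear ODE $\frac{dh_t}{dt} = A(\hat Y - h_t)$ explicitly and read off the asymptotics. First I would rewrite the equation in terms of the error $e_t := h_t - \hat Y$. Since $\hat Y$ is a constant vector, $\frac{de_t}{dt} = \frac{dh_t}{dt} = A(\hat Y - h_t) = -A e_t$, so $e_t = e^{-At} e_0$ with $e_0 = \mathcal{G} - \hat Y$. Hence $h_t = \hat Y + e^{-At}(\mathcal{G} - \hat Y)$, and it remains only to show $e^{-At} \to 0$ as $t \to \infty$.

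Next I would use that $A$ is positive definite. Note $A$ need not be symmetric as written, but the relevant fact is that its symmetrization $\frac{1}{2}(A + A^{\top})$ is positive definite; in this paper $A_{x,x'}$ is in fact symmetric in $x,x'$ (both summands are), so $A$ is a symmetric positive definite matrix and can be diagonalized as $A = Q \Lambda Q^{\top}$ with $Q$ orthogonal and $\Lambda = \operatorname{diag}(\lambda_1, \ldots, \lambda_M)$, $\lambda_j > 0$. Then $e^{-At} = Q e^{-\Lambda t} Q^{\top}$, and $\norm{e^{-At}} = \max_j e^{-\lambda_j t} = e^{-\lambda_{\min} t} \to 0$, where $\lambda_{\min} = \min_j \lambda_j > 0$. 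Therefore
\begin{eqnarray}
\norm{h_t - \hat Y} = \norm{e^{-At}(\mathcal{G} - \hat Y)} \leq e^{-\lambda_{\min} t} \norm{\mathcal{G} - \hat Y} \longrightarrow 0 \qquad \text{as } t \to \infty, \nonumber
\end{eqnarray}
which is the claim. (If one prefers not to assume symmetry, the same conclusion follows from the Lyapunov function $V(e) = \norm{e}^2$, whose derivative along trajectories is $-e^{\top}(A + A^{\top})e \leq -2\lambda_{\min}(\tfrac{1}{2}(A+A^{\top}))\norm{e}^2 < 0$ for $e \neq 0$, giving exponential decay of $\norm{e_t}$.)

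There is no real obstacle here: the statement is a standard fact about linear systems, and the only mild subtlety is the (in)dependence on symmetry of $A$, which is why I would record the Lyapunov-function argument as a robust alternative. One should also note that $\mathcal{G}$ is random, so the convergence $h_t \to \hat Y$ holds pathwise (almost surely), in fact with a deterministic exponential rate $e^{-\lambda_{\min} t}$ multiplying the random initial error $\norm{\mathcal{G} - \hat Y}$; this is worth stating explicitly since it also yields convergence in $L^p$ and in distribution for free.
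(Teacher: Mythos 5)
Your proposal matches the paper's proof exactly in its essential step: pass to the error variable $\tilde h_t = h_t - \hat Y$, observe it satisfies $d\tilde h_t = -A\tilde h_t\,dt$, and conclude decay from positive definiteness of $A$. You simply flesh out the last step (explicit solution via $e^{-At}$, symmetry of $A$, and a Lyapunov alternative), which the paper states without elaboration; the argument is the same.
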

\begin{proof}
Consider the transformation $\tilde h_t = h_t - \hat Y$. Then,
\begin{eqnarray}
d \tilde h_t &=& - A \tilde h_t dt, \notag \\
\tilde h_0 &=& \mathcal{G} - \hat Y.
\nonumber
\end{eqnarray}
Then, $\tilde h_t \rightarrow 0$ (and consequently $h_t \rightarrow \hat Y$) as $t \rightarrow \infty$ if $A$ is positive definite.

\end{proof}

\begin{corollary} \label{PositiveDefinite}
Assume Assumption \ref{A:Assumption1}. A sufficient condition for $A$ to be positive definite is $\sigma(\cdot)$ is non-polynomial and slowly increasing (i.e., $\lim_{x\rightarrow\infty}\frac{\sigma(x)}{x^a}=0$ for every $a>0$), $\mu_{0}$ is positive when evaluated on sets of positive Lebesgue measure and the data samples $x^{(i)}$ are in distinct directions (as defined on page 192 of \cite{yIto}).
\end{corollary}

\begin{proof}
See Section \ref{ProofOFPositiveDefinite}.

\end{proof}

Examples of activation units $\sigma(\cdot)$ satisfying the conditions in Corollary \ref{PositiveDefinite} include sigmoid functions and hyperbolic tangent functions. 
Using a normal distribution for the initialization of the parameters in the neural network is a common choice in practice (covered by the requirements of Corollary \ref{PositiveDefinite}). \\

\begin{remark}
For presentation purposes we have not explicitly denoted the bias term in the model. However, it is clear that this can be handled by requiring the first component of the vector $x$ to be equal to one for example. This would result in the neural network taking the form
$g^{N}(x; \theta) = \frac{1}{\sqrt{N}} \sum_{i=1}^N C^i \sigma( W^i \cdot  x+ b^{i})$. We leave the rest of the details to the interested reader.
\end{remark}

\subsection{Literature Review} \label{LitReview}

\cite{Montanari}, \cite{SirignanoSpiliopoulosNN1}, \cite{SirignanoSpiliopoulosNN2}, and \cite{RVE} study the asymptotics of single-layer neural networks with a $\frac{1}{N}$ normalization; that is, $g^{N}(x; \theta) = \frac{1}{N} \sum_{i=1}^N C^i \sigma( W^i  \cdot x)$. \cite{SirignanoSpiliopoulosNN3} studies the asymptotics of deep (i.e., multi-layer) neural networks with a $\frac{1}{N}$ normalization in each hidden layer. In the single layer case, the limit for the neural network satisfies a partial differential equation. As discussed in \cite{Montanari}, it is \emph{not} necessarily true that the limiting equation (a PDE in this case) will converge to the global minimum of an objective function with zero training error.

The $\frac{1}{N}$ normalization studied in \cite{Montanari}, \cite{SirignanoSpiliopoulosNN1}, \cite{SirignanoSpiliopoulosNN2}, and \cite{RVE} is convenient since the single-layer neural network is then in a traditional mean-field framework where it can be described via an empirical measure of the parameters. However, the $\frac{1}{\sqrt{N}}$ normalization that we study in this paper is more widely-used in practice (it is referred to as the Xavier initialization and was first introduced in \cite{Xavier}). The $\frac{1}{\sqrt{N}}$ normalization requires different analysis than the standard mean-field analysis $\frac{1}{N}$, and it produces a completely different limit. Importantly, under reasonable conditions, the limit equation converges to a global minimum with zero training error. In addition, for the limit to hold, we show that the $\frac{1}{\sqrt{N}}$ normalization requires the effective learning rate for the parameters to be of the order $N^{-3/2}$.

Convergence to a global minimum for a neural network has been recently proven in \cite{JasonLee}, \cite{Du1}, and \cite{UCLA2018}. Although it has been long understood that neural networks have universal approximation properties (see \cite{Hornik1}, \cite{Hornik2}, and \cite{Hornik3}), it has until recently been commonly believed that training algorithms for neural networks (e.g., gradient descent) may converge to a local minimum (and not a global minimum) since neural networks are non-convex.  \cite{JasonLee}, \cite{Du1}, and \cite{UCLA2018} showed that neural networks (under suitable conditions) will converge to a global minimum during training. This result is quite remarkable considering the  optimization problem is non-convex, and it provides an important mathematical guarantee for the field of deep learning.

\cite{JasonLee}, \cite{Du1}, and \cite{UCLA2018} do not study the mean-field limit of a neural network with the Xavier initialization, which is the focus of our paper. Once the mean field limit is established, we show that convergence to a global minimum is a simple consequence of the limit equation. There are also some differences between our assumptions and the assumptions required for the theorems of \cite{JasonLee}, \cite{Du1}, and \cite{UCLA2018}. \cite{JasonLee} and \cite{Du1} study gradient descent while our paper studies stochastic gradient descent, which introduces additional technical challenges due to the stochastic dynamics. \cite{UCLA2018} studies stochastic gradient descent for a framework where the neural network's output layer parameters are not trained. In their paper, the $C^i$ parameters are randomly generated and then frozen (i.e., they do not change during training). In practice, all of the parameters in the neural network, including the output layer parameters, are trained with stochastic gradient descent and therefore it is worthwhile to consider the more general case. \cite{UCLA2018} also imposes an assumption that the loss function vanishes at infinity. \cite{JasonLee}, \cite{Du1}, and \cite{UCLA2018} all require that every data sample has the same magnitude, i.e. $ \norm{ x^{(i)} } =1$ for every $i =1, \ldots, M$. We do not require this assumption.

\cite{NTK} proved a limit for neural networks with a Xavier initialization when they are trained with continuous-time gradient descent. Our paper proves a limit for neural networks trained with the (standard) discrete-time stochastic gradient descent algorithm which is used in practice. Our method of proof is also different than the approach of \cite{NTK}. Whereas \cite{NTK} begins their analysis in continuous time (due to their framework being continuous-time gradient descent), our paper rigorously passes from discrete time (where the stochastic gradient descent updates evolve) to continuous time through weak convergence analysis of appropriate stochastic processes and measure-valued processes. In \cite{NTK}, the authors directly study the evolution of the derivatives of the output with respect to the parameters, while we  address the limiting behavior of the underlying associated stochastic processes and  measure-valued processes.

\subsection{Organization of Paper} \label{Organization}
Section \ref{PreLimit} derives equations describing the evolution of the pre-limit process $(\mu^N, h^N)$. Relative compactness of the family of processes $(\mu^N, h^N)$ is proven in Section \ref{RelativeCompactness}. Section \ref{Identification} proves that any limit point of the process must satisfy the equation (\ref{EvolutionEquationIntroductionXavier}). These results are collected together in Section \ref{ProofOfConvergence} to prove that $(\mu^N, h^N)$ converges in distribution to the solution of equation (\ref{EvolutionEquationIntroductionXavier}). Corollary \ref{PositiveDefinite} is proven in Section \ref{ProofOFPositiveDefinite}.

\section{Evolution of the Pre-limit Process} \label{PreLimit}
We begin by analyzing evolution of the network output $g_k^N(x)$. Using a Taylor expansion,

\begin{eqnarray}
g_{k+1}^N(x)  &=& g_{k}^N(x)  + \frac{1}{\sqrt{N}} \sum_{i=1}^N C^i_{k+1} \sigma( W^i_{k+1} \cdot x) - \frac{1}{\sqrt{N}} \sum_{i=1}^N C^i_k \sigma( W^i_k \cdot x) \notag \\
&=& g_{k}^N(x)  + \frac{1}{\sqrt{N}} \sum_{i=1}^N \bigg{(} C^i_{k+1} \sigma( W^i_{k+1} \cdot x) -  C^i_k \sigma( W^i_k \cdot x) \bigg{)} \notag \\
&=& g_{k}^N(x)  + \frac{1}{\sqrt{N}} \sum_{i=1}^N \bigg{(} ( C^i_{k+1} - C^i_k )  \sigma( W^i_{k+1} \cdot x)  + (  \sigma( W^i_{k+1} \cdot x) -   \sigma( W^i_k \cdot x)  ) C^i_k \bigg{)} \notag \\
&=& g_{k}^N(x)  + \frac{1}{\sqrt{N}} \sum_{i=1}^N \bigg{(} ( C^i_{k+1} - C^i_k )  \bigg{[} \sigma(W^i_k \cdot x) +  \sigma'( W^{i,\ast}_{k} \cdot x_k ) x\cdot ( W^i_{k+1} - W^i_{k} ) \bigg{]}   \notag \\
&+& \bigg{[} \sigma'(W^i_k \cdot x) x\cdot ( W^i_{k+1} - W^i_k )   +  \frac{1}{2}\sigma''( W^{i, \ast \ast}_{k+1} \cdot x) \left( ( W^{i}_{k+1} - W^{i}_{k} ) \cdot x\right)^2  \bigg{]} C^i_k \bigg{)},
\label{gEvolution1}
\end{eqnarray}
for points $W^{i,\ast}_{k}$ and $W^{i,\ast,\ast}_{k}$ in the line segment connecting the points $W^{i}_{k}$ and $W^{i}_{k+1}$. Let $\alpha^N = \frac{\alpha}{N}$. Substituting (\ref{SGDupdates}) into (\ref{gEvolution1}) yields
\begin{eqnarray}
g_{k+1}^N(x)  &=& g_{k}^N(x) + \frac{\alpha}{N^2} \sum_{i=1}^N ( y_k - g_k^N(x_k) ) \sigma(W^i_k \cdot x_k ) \sigma(W^i_k \cdot x)   \notag \\
&+& \frac{\alpha}{N^2} \sum_{i=1}^N \sigma'(W^i_k \cdot x)   ( y_k - g_k^N(x_k) )  \sigma'(W^i_k \cdot x_k ) x_k \cdot x \big{(} C^i_k \big{)}^2 + \mathcal{O}(N^{-3/2}).
\label{gEvolution1b}
\end{eqnarray}

We can re-write the evolution of $g_k^N(x)$ in terms of the empirical measure $\nu_k^N$.
\begin{eqnarray}
g_{k+1}^N(x)  &=& g_{k}^N(x) + \frac{\alpha}{N} ( y_k - g_k^N(x_k) ) \la \sigma(w \cdot x_k ) \sigma(w \cdot x), \nu_k^N \ra   \notag \\
&+& \frac{\alpha}{N}   ( y_k - g_k^N(x_k) ) x_k\cdot x   \la \sigma'(w \cdot x)   \sigma'(w \cdot x_k ) c^2, \nu_k^N \ra + \mathcal{O}(N^{-3/2}).
\label{gEvolution2}
\end{eqnarray}

Using (\ref{gEvolution2}), we can write the evolution of $h_t^N$ for $t \in [0,T]$ as
\begin{eqnarray}
h_t^N &=& h_0^N + \sum_{k=0}^{\floor*{N t}-1} (  g_{k+1}^N - g_k ) \notag \\
&=& h_0^N +\frac{\alpha}{N}  \sum_{k=0}^{\floor*{N t}-1}  ( y_k - g_k^N(x_k) ) \la \sigma(w \cdot x_k ) \sigma(w \cdot x), \nu_k^N \ra   \notag \\
&+&  + \frac{\alpha}{N}   \sum_{k=0}^{\floor*{N t}-1}  ( y_k - g_k^N(x_k) ) x_k\cdot x   \la \sigma'(w \cdot x)   \sigma'(w \cdot x_k ) c^2, \nu_k^N \ra \notag \\
&+& \mathcal{O}(N^{-1/2}) \notag
\end{eqnarray}

Next, we decompose the summations into a drift and martingale component.
\begin{eqnarray}
h_t^N &=&  h_0^N + \frac{\alpha}{N}  \sum_{k=0}^{\floor*{N t}-1}  \int_{\mathcal{X} \times \mathcal{Y}} ( y - g_k^N(x') ) \la \sigma(w \cdot x' ) \sigma(w \cdot x), \nu_k^N \ra  \pi(dx',dy) \notag \\
&+& \frac{\alpha}{N}   \sum_{k=0}^{\floor*{N t}-1}  \int_{\mathcal{X} \times \mathcal{Y}} ( y - g_k^N(x') ) x\cdot x'    \la \sigma'(w \cdot x)   \sigma'(w \cdot x' ) c^2, \nu_k^N \ra  \pi(dx', dy) \notag \\
&+& \frac{\alpha}{N}  \sum_{k=0}^{\floor*{N t}-1} \bigg{(} ( y_k - g_k^N(x_k) ) \la \sigma(w \cdot x_k ) \sigma(w \cdot x), \nu_k^N \ra  - \int_{\mathcal{X} \times \mathcal{Y}}  ( y - g_k^N(x') ) \la \sigma(w \cdot x' ) \sigma(w \cdot x), \nu_k^N \ra  \pi(dx',dy)  \bigg{)}  \notag \\
&+& \frac{\alpha}{N}   \sum_{k=0}^{\floor*{N t}-1}  \bigg{(}  ( y_k - g_k^N(x_k) ) x_k\cdot x   \la \sigma'(w \cdot x)   \sigma'(w \cdot x_k ) c^2, \nu_k^N \ra  \notag \\
&-&  \int_{\mathcal{X} \times \mathcal{Y}} ( y - g_k^N(x') ) x\cdot x'    \la \sigma'(w \cdot x)   \sigma'(w \cdot x' ) c^2, \nu_k^N \ra  \pi(dx', dy) \bigg{)} \notag \\
&+& \mathcal{O}(N^{-1/2}) \notag
\end{eqnarray}

For convenience, we define the martingale terms (the third and fourth terms in the equation above) as $M_t^{N,1}$ and $M_t^{N,2}$, respectively. The equation for $h_t^N$ can be re-written in terms of a Riemann integral and the scaled measure $\mu_t^N$, yielding
\begin{eqnarray}
h_t^N &=&  h_0^N + \alpha \int_0^t  \int_{\mathcal{X} \times \mathcal{Y}}  ( y - h_s^N(x') ) \la \sigma(w \cdot x' ) \sigma(w \cdot x), \mu_s^N \ra  \pi(dx',dy) ds \notag \\
&+& \alpha \int_0^t \int_{\mathcal{X} \times \mathcal{Y}} ( y - h_s^N(x') ) x\cdot x'    \la \sigma'(w \cdot x)   \sigma'(w \cdot x' ) c^2, \mu_s^N \ra  \pi(dx', dy) ds  \notag \\
&+& M_t^{N,1} + M_t^{N,2} + \mathcal{O}(N^{-1/2}).
\label{hEvolutionWithRemainderTerms}
\end{eqnarray}

In addition, using conditional independence of the terms in the series for $M_t^{N,1}$ and $M_t^{N,2}$ as well as the bounds from Lemmas \ref{GLemmaBound} and \ref{CandWbounds}, we have that
\begin{eqnarray}
\mathbb{E} \bigg{[} \big{(} M_t^{N,1} \big{)}^2 \bigg{]} &\leq& \frac{K}{N}, \notag \\
\mathbb{E} \bigg{[} \big{(} M_t^{N,2} \big{)}^2 \bigg{]} &\leq& \frac{K}{N}.\notag
\end{eqnarray}

We can also analyze the evolution of the empirical measure $\nu_k^N$ in terms of test functions $f \in C^2_b(\mathbb{R}^{1 + d })$. Using a Taylor expansion, we find that

\begin{eqnarray}
\la f , \nu^N_{k+1} \ra - \la f , \nu^N_k \ra 
&=&  \frac{1}{N} \sum_{i=1}^N \bigg{(} f(C^i_{k+1}, W^i_{k+1} ) -  f(C^i_{k}, W^i_{k} )  \bigg{)} \notag \\
&=& \frac{1}{N} \sum_{i=1}^N \partial_c f(C^i_{k}, W^i_{k} ) ( C^i_{k+1} -  C^i_{k} )  + \frac{1}{N} \sum_{i=1}^N \nabla_w  f(C^i_{k}, W^i_{k} )^{\top}  ( W^i_{k+1} -  W^i_{k} ) \notag \\
&+& \frac{1}{N} \sum_{i=1}^N \partial^{2}_{c} f(\bar C^i_{k},  \bar W^i_{k} ) ( C^i_{k+1} -  C^i_{k} )^2  + \frac{1}{N} \sum_{i=1}^N ( C^i_{k+1} -  C^i_{k} )\nabla_{cw}  f(\hat C^i_{k}, \hat W^i_{k} )\cdot ( W^i_{k+1} -  W^i_{k} )    \notag \\
&+& \frac{1}{N} \sum_{i=1}^N ( W^i_{k+1} -  W^i_{k} )\cdot\nabla^{2}_{w} f(\tilde C^i_{k}, \tilde W^i_{k} ) ( W^i_{k+1} -  W^i_{k} ),
\label{NuEvolution1}
\end{eqnarray}
for points $(\bar C^{i}_{k}, \bar W^{i}_{k})$, $(\hat C^{i}_{k}, \hat W^{i}_{k})$ and $(\tilde C^{i}_{k}, \tilde W^{i}_{k})$ in the segments connecting $C^i_{k+1}$ with $C^i_{k}$ and  $W^i_{k+1}$ with $W^i_{k}$, respectively.

Substituting (\ref{SGDupdates}) into (\ref{NuEvolution1}) yields
\begin{eqnarray}
\la f , \nu^N_{k+1} \ra - \la f , \nu^N_k \ra &=&  N^{-5/2} \sum_{i=1}^N \partial_c f(C^i_{k}, W^i_{k} )  \alpha (y_k - g_k^N(x_k) )  \sigma (W^i_k \cdot  x_k)   \notag \\
&+& N^{-5/2} \sum_{i=1}^N   \alpha (y_k - g_{k}^N(x_k) )  C^i_k \sigma' (W^i_k \cdot x_k) \nabla_w  f(C^i_{k}, W^i_{k} )\cdot x_{k} + O_{p}\left(N^{-2}\right) \notag \\
&=&  N^{-3/2} \alpha (y_k - g_k^N(x_k) )   \la \partial_c f(c, w )   \sigma (w \cdot  x_k), \nu_k^N \ra   \notag \\
&+& N^{-3/2} \alpha (y_k - g_{k}^N(x_k) )     \la c \sigma' (w  \cdot x_k) \nabla_w  f(c,w )\cdot x_{k}, \nu_k^N \ra + O_{p}\left(N^{-2}\right).\nonumber
\end{eqnarray}

Therefore,
\begin{eqnarray}
\la f, \mu^N_t \ra &=&  \la f, \mu^N_0 \ra  + \sum_{k=0}^{\floor*{N t}-1} \bigg{(} \la f , \nu^N_{k+1} \ra - \la f , \nu^N_k \ra \bigg{)} \notag \\
&=&  \la f, \mu^N_0 \ra  +   N^{-3/2}  \sum_{k=0}^{\floor*{N t}-1}  \alpha (y_k - g_k^N(x_k) )   \la \partial_c f(c, w )   \sigma (w  \cdot x_k), \nu_k^N \ra   \notag \\
&+& N^{-3/2}  \sum_{k=0}^{\floor*{N t}-1}   \alpha (y_k - g_{k}^N(x_k) )     \la c \sigma' (w  \cdot x_k) \nabla_w  f(c,w )\cdot x_{k}, \nu_k^N \ra   + O_{p}\left(N^{-1}\right).
\label{muEvolutionWithRemainderTerms}
\end{eqnarray}

\section{Relative Compactness}  \label{RelativeCompactness}

In this section we prove that the family of processes $\{  \mu^N, h^N \}_{N}$ is relatively compact. Section \ref{CompactContainment} proves compact containment. Section \ref{Regularity} proves regularity. Section \ref{ProofOfRelativeCompactness} combines these results to prove relative compactness.

\subsection{Compact Containment} \label{CompactContainment}

We first establish a priori bounds for the parameters $(C_k^i, W_k^i)$.

\begin{lemma} \label{CandWbounds}

For all $i\in\mathbb{N}$ and all $k$ such that $k/N\leq T$,

\begin{eqnarray}
| C_k^i | &<& C < \infty \notag \\
\mathbb{E} \norm{ W_{k}^{i} } &<& C < \infty.\nonumber
\end{eqnarray}
\end{lemma}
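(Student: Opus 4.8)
The plan is to induct on $k$, exploiting the $N^{-3/2}$ effective learning rate built into $\alpha^N = \alpha/N$ together with the boundedness of $\sigma$ and $\sigma'$. First I would establish a crude bound on the network output: since $\sigma$ is bounded, $|g_k^N(x)| = |\frac{1}{\sqrt N}\sum_i C_k^i \sigma(W_k^i\cdot x)| \le \|\sigma\|_\infty \frac{1}{\sqrt N}\sum_i |C_k^i|$, so if we have a uniform bound $|C_k^i| \le B_k$ valid at step $k$, then $|g_k^N(x_k)| \le \sqrt N\, \|\sigma\|_\infty B_k$, and hence $|y_k - g_k^N(x_k)| \le \|y\|_\infty + \sqrt N \|\sigma\|_\infty B_k \le K\sqrt N (1 + B_k)$ using compact support of $\pi$ (recall $\|y\|_\infty < \infty$).

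Next I would track the $C$-updates. From \eqref{SGDupdates} with $\alpha^N = \alpha/N$,
\[
|C_{k+1}^i - C_k^i| = \frac{\alpha}{N^{3/2}} |y_k - g_k^N(x_k)|\, |\sigma(W_k^i\cdot x_k)| \le \frac{\alpha \|\sigma\|_\infty}{N^{3/2}} \cdot K\sqrt N (1 + B_k) = \frac{\alpha K \|\sigma\|_\infty}{N}(1 + B_k).
\]
Writing $B_{k+1} = B_k + \frac{C'}{N}(1 + B_k)$ and iterating over $k \le TN$ steps, a discrete Grönwall argument gives $1 + B_k \le (1 + B_0)\prod_{j<k}(1 + C'/N) \le (1+B_0) e^{C' T}$; since $C_0^i$ has compact support, $B_0 < \infty$ and we get a bound $|C_k^i| \le C$ uniform in $i, N$, and in $k$ with $k/N \le T$. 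For the $W$-updates, $\|W_{k+1}^i - W_k^i\| = \frac{\alpha}{N^{3/2}} |y_k - g_k^N(x_k)|\, |C_k^i|\, |\sigma'(W_k^i\cdot x_k)|\, \|x_k\| \le \frac{C''}{N}$ using the bound just obtained on $|C_k^i|$, boundedness of $\sigma'$, and compact support of $\pi$ in $x$; hence $\|W_k^i\| \le \|W_0^i\| + C'' T$, and taking expectations with $\la \|w\|, \mu_0\ra < \infty$ yields $\mathbb{E}\|W_k^i\| \le C$.

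The main subtlety — and the one point requiring care rather than being purely routine — is that the induction on $C$ and the bound on $g_k^N$ are coupled through the factor $\sqrt N$: the crude estimate $|g_k^N(x_k)| \le \sqrt N \|\sigma\|_\infty B_k$ loses a power of $\sqrt N$, but this is exactly compensated by the $N^{-3/2}$ in the learning rate, leaving an $O(1/N)$ increment per step and $O(1)$ over the $TN$-step horizon. One must make sure the induction is set up so that the bound $B_k$ used to estimate $g_k^N$ at step $k$ is the one already proven, and that the Grönwall constant $C'$ does not itself depend on $N$ — it does not, since it only involves $\alpha$, $\|\sigma\|_\infty$, and the (fixed) support radius of $\pi$. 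A secondary point is that the $W$-bound only needs to hold in expectation because $\|W_0^i\|$ itself is only controlled in expectation (via $\la \|w\|, \mu_0\ra$), whereas $C_0^i$ is assumed to have compact support, so the $C$-bound can be taken pathwise; this asymmetry is already reflected in the statement of the lemma.
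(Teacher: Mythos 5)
Your proof is correct and uses the same key mechanism as the paper: the $N^{-3/2}$ effective learning rate beats the $O(\sqrt{N})$ growth of $|g_k^N(x_k)|$, leaving an $O(1/N)$ per-step increment that discrete Gr\"onwall controls over the $TN$-step horizon. The one bookkeeping difference is that you run Gr\"onwall directly on a deterministic uniform bound $B_k \geq \sup_i |C_k^i|$ (valid since $C_0^i$ has compact support, so $B_0<\infty$ pathwise), whereas the paper first averages over $i$, applies Gr\"onwall to $m_k^N=\frac{1}{N}\sum_i|C_k^i|$, and then plugs back to recover the individual bound; your version is marginally more direct and also lets you skip the Young's-inequality manipulations the paper uses in the $W$-estimate, since the pathwise $C$-bound is already in hand when you treat $W$.
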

\begin{proof}

The unimportant finite constant $C<\infty$ may change from line to line. We first observe that
\begin{eqnarray*}
| C_{k+1}^i  | &\leq&   | C_{k}^i | + \alpha N^{-3/2} \left| y_k - g_{k}^N(x_k)  \right|  | \sigma (W^i_k \cdot  x_k) | \notag \\
&\leq&  | C_{k}^i | + \frac{  C  | y_k |  }{N^{3/2}} +  \frac{C}{N^2} \sum_{i=1}^N | C_k^i |,
\end{eqnarray*}
where the last inequality follows from the definition of $g_{k}^N(x)$ and the uniform boundedness assumption on $\sigma(\cdot)$.

Then, we subsequently obtain that
\begin{eqnarray*}
| C_{k}^i | &=& | C_{0}^i |  + \sum_{j = 1}^k \bigg{[} | C_{j}^i | - | C_{j-1}^i | \bigg{]} \notag \\
&\leq& | C_{0}^i |  + \sum_{j=1}^k  \frac{ C   }{N^{3/2}}  +  \frac{C}{N^2} \sum_{j=1}^k \sum_{i=1}^N | C_{j-1}^i | \notag \\
&\leq& | C_{0}^i |  + \frac{C}{\sqrt{N}}  +  \frac{C}{N^2} \sum_{j=1}^k \sum_{i=1}^N | C_{j-1}^i |.
\label{Cbound0011}
\end{eqnarray*}

This implies that
\begin{eqnarray*}
\frac{1}{N} \sum_{i=1}^N | C_{k}^i | &\leq& \frac{1}{N} \sum_{i=1}^N | C_{0}^i |  +  \frac{C}{\sqrt{N}} +  \frac{C}{N^2} \sum_{j=1}^k \sum_{i=1}^N | C_{j-1}^i |,
\end{eqnarray*}

Let us now define $m_{k}^{N}=\frac{1}{N} \displaystyle \sum_{i=1}^N | C_{k}^i |$. Since the random variables $C_0^i$ take values in a compact set, we have that $ \frac{1}{N} \displaystyle \sum_{i=1}^N | C_{0}^i |  +  \frac{C}{\sqrt{N}}  < C < \infty$. Then,
\begin{eqnarray*}
m_{k}^{N} &\leq& C +  \frac{C}{N} \sum_{j=1}^k m_{j-1}^{N}.
\end{eqnarray*}

By the discrete Gronwall lemma and using $k/N\leq T$,
\begin{eqnarray}
m_k^N \leq C \exp \bigg{(} \frac{C k}{N} \bigg{)} \leq C.
\label{mBound}
\end{eqnarray}

Note that the constants may depend on $T$.

 We can now combine the bounds (\ref{mBound}) and (\ref{Cbound0011}) to yield, for any $0 \leq k \leq TN$,
\begin{eqnarray}
| C_{k}^i | &\leq& | C_{0}^i |  + \frac{C}{\sqrt{N}}  +  \frac{C}{N^2} \sum_{j=1}^k  m_{j-1}^N \notag \\
&\leq& | C_{0}^i |  + \frac{C}{\sqrt{N}}  +  \frac{C}{N^2} \sum_{j=1}^k  C_2 \notag \\
&\leq& | C_{0}^i |  + \frac{C}{\sqrt{N}}  +  \frac{C}{N} \notag \\
&\leq& C,
\label{Cbound0022}
\end{eqnarray}
where the last inequality follows from the random variables $C_0^i$ taking values in a compact set.

Now, we turn to the bound for $\parallel W^i_k \parallel$. We start with the bound (using Young's inequality)
\begin{align}
 \parallel W^i_{k+1} \parallel &\leq  \parallel W^i_k \parallel +  \frac{C}{N^{3/2}} \left( |y_k| +  \frac{1}{\sqrt{N} }\sum_{j=1}^N | C^j_k | \right) | C^i_k|    | \sigma'(W^i_k \cdot x_k ) | \parallel x_k \parallel\nonumber\\
 &\leq \parallel W^i_k \parallel +   C \left(\frac{1}{N} |y_k|^{2} + \frac{1}{N^{2}}\sum_{j=1}^N | C^j_k |^{2} + \frac{1}{N} | C^i_k|^{2}   \parallel x_k \parallel^{2}\right)\nonumber\\
 &\leq \parallel W^i_k \parallel +   C \left(\frac{1}{N} |y_k|^{2} + \frac{1}{N^{2}}\sum_{j=1}^N | C^j_k |^{2}+ \frac{1}{N} | C^i_k|^{4}+  \frac{1}{N} \parallel x_k \parallel^{4}\right),\nonumber
\end{align}
for a constant $C<\infty$ that may change from line to line. Taking an expectation, using Assumption \ref{A:Assumption1}, the bound (\ref{Cbound0022}), and using the fact that $k/N\leq T$, we obtain
\begin{align*}
\mathbb{E}\parallel W^i_k \parallel\leq C<\infty, 
\end{align*}
for all $i\in\mathbb{N}$ and all $k$ such that $k/N\leq T$, concluding the proof of the lemma.

\end{proof}

Using the bounds from Lemma \ref{CandWbounds}, we can now establish a bound for $g_k^N(x)$ for $x \in \mathcal{D}$.

\begin{lemma} \label{GLemmaBound}

For all $i\in\mathbb{N}$, all $k$ such that $k/N\leq T$, and any $x \in \mathcal{D}$,

\begin{eqnarray}
\mathbb{E} \bigg{[} | g_k^N(x) |^2 \bigg{]} &<& C < \infty.\nonumber
\end{eqnarray}

\end{lemma}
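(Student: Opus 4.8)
The plan is to bound $g_k^N(x)$ by writing it in terms of the empirical measure: since $g_k^N(x) = \la c\,\sigma(w\cdot x), \sqrt{N}\,\nu_k^N\ra = \frac{1}{\sqrt{N}}\sum_{i=1}^N C_k^i\,\sigma(W_k^i\cdot x)$, and $\sigma$ is bounded by Assumption \ref{A:Assumption1}, we immediately get the deterministic bound
\begin{eqnarray}
| g_k^N(x) | \leq \frac{\norm{\sigma}_\infty}{\sqrt{N}} \sum_{i=1}^N | C_k^i |.\nonumber
\end{eqnarray}
By Lemma \ref{CandWbounds}, each $|C_k^i| < C$ for all $i$ and all $k$ with $k/N \leq T$, so the right-hand side is at most $\norm{\sigma}_\infty \sqrt{N}\, C$ — but this only gives $|g_k^N(x)| = \mathcal{O}(\sqrt{N})$, which is not good enough. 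The key observation is that we must instead use the averaged quantity $m_k^N = \frac{1}{N}\sum_{i=1}^N |C_k^i|$, for which the proof of Lemma \ref{CandWbounds} established the sharper bound $m_k^N \leq C$ (see equation (\ref{mBound})) uniformly in $N$ and in $k$ with $k/N \leq T$. Then
\begin{eqnarray}
| g_k^N(x) | \leq \frac{\norm{\sigma}_\infty}{\sqrt{N}} \sum_{i=1}^N | C_k^i | = \norm{\sigma}_\infty \sqrt{N}\, m_k^N,\nonumber
\end{eqnarray}
which still has a spurious $\sqrt{N}$. This suggests the naive pointwise route is insufficient and one genuinely needs cancellation among the $C_k^i\sigma(W_k^i\cdot x)$ terms, i.e. a second-moment estimate exploiting the near-independence/mean-zero structure of the summands.

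So the real approach is a second-moment (variance) argument. At initialization, $\mathbb{E}[|g_0^N(x)|^2] = \frac{1}{N}\sum_{i,j} \mathbb{E}[C_0^i C_0^j \sigma(W_0^i\cdot x)\sigma(W_0^j\cdot x)]$; since the $(C_0^i, W_0^i)$ are i.i.d. and mean-zero in $C$, the cross terms vanish and this equals $\mathbb{E}[(C_0^1)^2 \sigma(W_0^1\cdot x)^2] \leq C$, giving the bound at $k=0$ (consistent with the stated Gaussian limit (\ref{Gdefinition})). For general $k$, I would iterate the evolution equation (\ref{gEvolution2}): writing $g_{k+1}^N(x) = g_k^N(x) + \frac{\alpha}{N} R_k^N(x)$ where $R_k^N(x)$ collects the two inner-product drift terms plus the $\mathcal{O}(N^{-1/2})$ remainder, one has
\begin{eqnarray}
| R_k^N(x) | \leq C\big( |y_k| + |g_k^N(x_k)| \big)\big( 1 + \norm{x_k}^2 \big),\nonumber
\end{eqnarray}
using boundedness of $\sigma, \sigma'$, the bound $|C_k^i|<C$ from Lemma \ref{CandWbounds}, and compact support of $\pi$. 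Then $\mathbb{E}[|g_{k+1}^N(x)|^2] \leq (1 + \tfrac{C}{N})\mathbb{E}[|g_k^N(x)|^2] + \tfrac{C}{N}\big(1 + \max_{x'\in\mathcal{D}}\mathbb{E}[|g_k^N(x')|^2]\big)$ after Cauchy–Schwarz; taking the maximum over $x \in \mathcal{D}$ (a finite set of size $M$) gives a recursion $a_{k+1} \leq (1+\tfrac{C}{N}) a_k + \tfrac{C}{N}$ for $a_k := \max_{x\in\mathcal{D}} \mathbb{E}[|g_k^N(x)|^2]$, and the discrete Gronwall lemma with $k/N \leq T$ yields $a_k \leq C\, e^{CT} \leq C < \infty$, uniformly in $N$.

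The main obstacle is controlling the cross terms in the second moment after the dynamics have run — unlike at $k=0$, the pairs $(C_k^i,W_k^i)$ and $(C_k^j,W_k^j)$ are correlated for $k>0$ (this is precisely the subtlety flagged in the introduction about why a limit might a priori fail to exist), so one cannot simply drop the off-diagonal contributions. The resolution is not to expand $g_k^N(x)$ back into its constituent neurons at all, but to work directly with the closed recursion (\ref{gEvolution2}) for the scalar quantities $g_k^N(x)$, $x \in \mathcal{D}$ — this is what makes the bound tractable, since the inner products $\la \sigma(w\cdot x_k)\sigma(w\cdot x), \nu_k^N\ra$ are uniformly bounded by $\norm{\sigma}_\infty^2$ deterministically, and $\la c^2 \sigma'(\cdot)\sigma'(\cdot) x_k^\top x, \nu_k^N\ra$ is bounded using $|C_k^i| < C$ and compact support of the data. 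One should also verify that the $\mathcal{O}(N^{-1/2})$ remainder in (\ref{gEvolution2}) has second moment $\mathcal{O}(N^{-1})$ uniformly in $k$, which again follows from the Taylor-expansion terms involving $\sigma''$, the parameter bounds of Lemma \ref{CandWbounds}, and compact support; this is routine but must be checked so that summing $\floor{Nt}$ such terms keeps the contribution $\mathcal{O}(1)$.
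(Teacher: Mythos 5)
Your final approach is essentially the same as the paper's: bound $\mathbb{E}[|g_0^N(x)|^2]$ at initialization using the i.i.d.\ mean-zero structure of $(C_0^i,W_0^i)$, then iterate the closed recursion coming from the evolution equation (\ref{gEvolution2}) for $g_k^N(x)$, $x\in\mathcal D$, using boundedness of $\sigma,\sigma'$, the uniform bound $|C_k^i|\leq C$, and compact support of $\pi$ to get a Gronwall-type inequality for the second moments, then aggregate over the finite set $\mathcal{D}$ (the paper sums over $\mathcal{D}$ where you take the max — equivalent up to constants since $|\mathcal{D}|=M$) and apply the discrete Gronwall lemma with $k/N\leq T$. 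Your opening digression through the naive pointwise bound and then through the initialization-only variance argument is unnecessary exposition, but the argument you land on is correct and matches the paper.
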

\begin{proof}

Recall equation (\ref{gEvolution1b}), which describes the evolution of $g_k^N(x)$.
\begin{eqnarray}
g_{k+1}^N(x)  &=& g_{k}^N(x) + \frac{\alpha}{N^2} \sum_{i=1}^N ( y_k - g_k^N(x_k) ) \sigma(W^i_k \cdot x_k ) \sigma(W^i_k \cdot x)   \notag \\
&+&   \frac{\alpha}{N^2} \sum_{i=1}^N \sigma'(W^i_k \cdot x)   ( y_k - g_k^N(x_k) )  \sigma'(W^i_k \cdot x_k ) x_k\cdot x \big{(} C^i_k \big{)}^2 + \frac{C}{N^{3/2}}. \notag
\end{eqnarray}

This leads to the bound
\begin{eqnarray}
| g_{k+1}^N(x) |  &\leq& | g_{k}^N(x) | + \frac{\alpha}{N^2} \sum_{i=1}^N | y_k - g_k^N(x_k) |   +  \frac{\alpha}{N^2} \sum_{i=1}^N  | y_k - g_k^N(x_k) |  \big{(} C^i_k \big{)}^2 + \frac{C}{N^{-3/2}} \notag \\
&\leq& | g_{k}^N(x) | + \frac{C}{N} | g_k^N(x_k) |  + \frac{C}{N}.\nonumber
\end{eqnarray}

We now square both sides of the above inequality.
\begin{eqnarray}
| g_{k+1}^N(x) |^2  &\leq& \big{(} | g_{k}^N(x) | + \frac{C}{N} | g_k^N(x_k) |  + \frac{C}{N} \big{)}^2 \notag \\
&\leq& | g_{k}^N(x) |^2 +2  | g_{k}^N(x) | (  \frac{C}{N} | g_k^N(x_k) |  + \frac{C}{N} ) +(  \frac{C}{N} | g_k^N(x_k) |  + \frac{C}{N} )^2 \notag \\
&\leq& | g_{k}^N(x) |^2 + \frac{C}{N}   | g_{k}^N(x) |^2  + \frac{C}{N},\nonumber
\end{eqnarray}
where the last line uses Young's inequality.

Therefore,
\begin{eqnarray}
| g_{k+1}^N(x) |^2 - | g_{k}^N(x) |^2 &\leq& \frac{C}{N} | g_k^N(x_k) |^2  + \frac{C}{N}.\nonumber
\end{eqnarray}

Then, using a telescoping series,

\begin{eqnarray}
| g_{k}^N(x) |^2 &=& | g_{0}^N(x) |^2 + \sum_{j=1}^k \bigg{(} | g_{j}^N(x) |^2 - | g_{j-1}^N(x) |^2 \bigg{)}  \notag \\
&\leq&  | g_{0}^N(x) |^2 + \sum_{j=1}^k  \bigg{(}  \frac{C}{N} | g_{j-1}^N(x_{j-1}) |^2 + \frac{C}{N} \bigg{)} \notag \\
&\leq&  | g_{0}^N(x) |^2 + C +  \frac{C}{N}  \sum_{j=1}^k  | g_{j-1}^N(x_{j-1}) |^2.\nonumber
\end{eqnarray}

Taking expectations,
\begin{eqnarray}
\mathbb{E} \bigg{[} | g_{k}^N(x) |^2  \bigg{]} \leq \mathbb{E} \bigg{[} | g_{0}^N(x) |^2  \bigg{]} + C +  \frac{C}{N}  \sum_{j=1}^k   \mathbb{E} \bigg{[} | g_{j-1}^N(x_{j-1}) |^2 \bigg{]}.\nonumber
\end{eqnarray}

Taking advantage of the fact that $x_j$ is sampled from a fixed dataset $\mathcal{D}$ of $M$ data samples,
\begin{eqnarray}
\mathbb{E} \bigg{[} | g_{k}^N(x) |^2  \bigg{]} &\leq& \mathbb{E} \bigg{[} | g_{0}^N(x) |^2  \bigg{]} + C +  \frac{C}{N}  \sum_{j=1}^k  \sum_{x' \in \mathcal{D}}  \mathbb{E} \bigg{[} | g_{j-1}^N(x') |^2 \bigg{]},
\label{gBound0011}
\end{eqnarray}
and therefore
\begin{eqnarray}
\sum_{x \in \mathcal{D}} \mathbb{E} \bigg{[} | g_{k}^N(x) |^2  \bigg{]} &\leq& \sum_{x \in \mathcal{D}} \mathbb{E} \bigg{[} | g_{0}^N(x) |^2  \bigg{]} + M C +  \frac{C M}{N}  \sum_{j=1}^k  \sum_{x' \in \mathcal{D}}  \mathbb{E} \bigg{[} | g_{j-1}^N(x') |^2 \bigg{]} \notag \\
&\leq& \sum_{x \in \mathcal{D}} \mathbb{E} \bigg{[} | g_{0}^N(x) |^2  \bigg{]} +  C +  \frac{C}{N}  \sum_{j=1}^k  \sum_{x \in \mathcal{D}}  \mathbb{E} \bigg{[} | g_{j-1}^N(x) |^2 \bigg{]}.
\label{gSumBound}
\end{eqnarray}

Recall that
\begin{eqnarray}
g_0^N(x) = \frac{1}{\sqrt{N}} \sum_{i=1}^N C^i_0 \sigma(W^i_0 \cdot x),\nonumber
\end{eqnarray}
where $(C^i_0, W^i_0)$ are i.i.d., mean-zero random variables. Then,
\begin{eqnarray}
\mathbb{E} \bigg{[} | g_0^N(x)  |^2 \bigg{]}   &\leq& \mathbb{E} \bigg{[} \bigg{(} \frac{1}{\sqrt{N}} \sum_{i=1}^N C^i_0 \sigma(W^i_0 \cdot x) \bigg{)}^2 \bigg{]} \notag \\
&\leq& \frac{C}{N} \sum_{i=1}^N \mathbb{E} \bigg{[} ( C^i_0 )^2 \bigg{]} \notag \\
&\leq& C.\nonumber
\end{eqnarray}

Combining this bound with the bound (\ref{gSumBound}) and using the discrete Gronwall lemma yields, for any $0 \leq k \leq TN$,
\begin{eqnarray}
\sum_{x \in \mathcal{D}} \mathbb{E} \bigg{[} | g_{k}^N(x) |  \bigg{]} \leq C.\nonumber
\end{eqnarray}

Substituting this bound into equation (\ref{gBound0011}) produces the desired bound
\begin{eqnarray}
\mathbb{E} \bigg{[} | g_{k}^N(x) |^2  \bigg{]} \leq C,\nonumber
\end{eqnarray}
for any $0 \leq k \leq TN$.

\end{proof}

We now prove compact containment for process $\{ (\mu_t^N, h_t^N), t \in [0,T]\}_{N\in\mathbb{N}}$. Recall that $(\mu_t^N, h_t^N) \in D_E([0,T])$ where $E = \mathcal{M}(\mathbb{R}^{1+d}) \times \mathbb{R}^M$.

\begin{lemma}\label{L:CompactContainment}
For each $\eta > 0$, there is a compact subset $\mathcal{K}$ of E such that
\begin{eqnarray*}
\sup_{N \in \mathbb{N}, 0 \leq t \leq T} \mathbb{P}[ (\mu_t^N, h_t^N) \notin \mathcal{K} ] < \eta.
\end{eqnarray*}
\end{lemma}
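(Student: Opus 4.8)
The plan is to establish compact containment for $\{(\mu_t^N, h_t^N)\}$ by handling the two coordinates separately, since $E = \mathcal{M}(\mathbb{R}^{1+d}) \times \mathbb{R}^M$ is a product space and a product of compact sets is compact. For the $h_t^N$ coordinate, which lives in $\mathbb{R}^M$, I would invoke Lemma \ref{GLemmaBound}: it gives $\mathbb{E}[|g_k^N(x)|^2] \le C$ uniformly in $k \le TN$ and $x \in \mathcal{D}$, hence $\mathbb{E}[|h_t^N(x)|^2] = \mathbb{E}[|g_{\floor*{Nt}}^N(x)|^2] \le C$ uniformly in $N$ and $t \in [0,T]$. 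By Markov's inequality, for each $\eta > 0$ choose $R$ large enough that $\sup_{N, t} \mathbb{P}[\,\|h_t^N\| > R\,] \le \sum_{x \in \mathcal{D}} \sup_{N,t} \mathbb{P}[|h_t^N(x)| > R/\sqrt{M}] \le MC\cdot M/R^2 < \eta/2$; then $h_t^N$ lies in the closed ball $\bar B_R \subset \mathbb{R}^M$, which is compact, with probability at least $1 - \eta/2$.

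For the measure-valued coordinate $\mu_t^N$, the plan is to exploit the fact (from equation (\ref{muEvolutionWithRemainderTerms})) that $\mu_t^N$ is extremely close to $\mu_0^N = \nu_0^N$: the increments are $O(N^{-3/2})$ per step and there are $O(N)$ steps, so $\la f, \mu_t^N\ra - \la f, \mu_0^N\ra = O_p(N^{-1/2})$ for each test function. More usefully, at the level of the parameters themselves, Lemma \ref{CandWbounds} already controls the support: $|C_k^i| < C$ deterministically and $\mathbb{E}\|W_k^i\| < C$ uniformly in $i$ and $k \le TN$. This means the $C$-marginal of $\mu_t^N$ is supported in the fixed compact interval $[-C, C]$ for all $N$ and $t$, while the $W$-marginal satisfies $\la \|w\|, \mu_t^N \ra = \frac{1}{N}\sum_i \|W_{\floor*{Nt}}^i\|$, whose expectation is bounded by $C$ uniformly — by Markov this gives a uniform tightness estimate $\sup_{N,t}\mathbb{P}[\la \|w\|, \mu_t^N\ra > R] \le C/R$. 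To turn this into a compact subset of $\mathcal{M}(\mathbb{R}^{1+d})$ I would use the standard criterion that $\{\mu : \la \varphi, \mu\ra \le c\}$ is relatively compact (tight) in the weak topology whenever $\varphi$ has compact sublevel sets; here $\varphi(c,w) = \|w\|$ restricted to $c \in [-C,C]$ works once combined with the deterministic bound on the $c$-coordinate. So: pick $R$ with $C/R < \eta/2$, let $\mathcal{K}_1 = \{\mu : \mathrm{supp}\,\mu_c \subset [-C,C],\ \la \|w\|, \mu\ra \le R\}$, which is weakly compact, and $\mathbb{P}[\mu_t^N \notin \mathcal{K}_1] \le \eta/2$ uniformly.

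Then take $\mathcal{K} = \mathcal{K}_1 \times \bar B_R$, which is compact in $E$, and a union bound gives $\sup_{N, 0 \le t \le T}\mathbb{P}[(\mu_t^N, h_t^N) \notin \mathcal{K}] < \eta$, as required. The one point needing care — the main (minor) obstacle — is justifying that the set of measures with a prescribed bound on $\la\|w\|,\mu\ra$ together with compactly-supported $c$-marginal is genuinely compact rather than merely relatively compact in $\mathcal{M}(\mathbb{R}^{1+d})$; this is handled by replacing $\la\|w\|,\mu\ra \le R$ with a countable intersection of closed conditions (or simply working with a closed superset such as $\{\mu : \la \|w\| \wedge n, \mu\ra \le R \text{ for all } n\}$, which is both tight and closed, hence compact). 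I do not anticipate any real difficulty, since all the quantitative inputs are already supplied by Lemmas \ref{CandWbounds} and \ref{GLemmaBound}; this lemma is essentially an assembly step.
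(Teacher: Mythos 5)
Your proposal is correct and follows the same overall architecture as the paper: control the $h_t^N$ coordinate via Lemma \ref{GLemmaBound} plus Markov's inequality, control the $\mu_t^N$ coordinate via Lemma \ref{CandWbounds} plus Markov's inequality, and then take a product of compact sets. The two arguments differ only in how the compact subset of $\mathcal{M}(\mathbb{R}^{1+d})$ is built. The paper works directly with tail probabilities: it sets $K_L=[-L,L]^{1+d}$, derives $\mathbb{E}[\mu_t^N(\mathbb{R}^{1+d}\setminus K_L)]\le C/L$, defines $\hat K_L$ as the closure of the set of measures with $\nu(\mathbb{R}^{1+d}\setminus K_{(L+j)^2})<(L+j)^{-1/2}$ for all $j\in\mathbb{N}$, and controls $\mathbb{P}[\mu_t^N\notin\hat K_L]$ by a summable union bound. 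You instead use a first-moment criterion, exploiting that $|C_k^i|$ is bounded deterministically while $\mathbb{E}\,\la\norm{w},\mu_t^N\ra\le C$; both routes are standard and rest on exactly the same quantitative inputs. The one point you flagged as needing care --- whether $\mathcal{K}_1=\{\mu:\mathrm{supp}(\mu_c)\subset[-C,C],\ \la\norm{w},\mu\ra\le R\}$ is genuinely closed and not merely relatively compact --- is already settled without modification: $\mu\mapsto\la\norm{w},\mu\ra$ is lower semicontinuous in the weak topology (write $\norm{w}=\sup_n(\norm{w}\wedge n)$ as an increasing limit of bounded continuous functions and apply monotone convergence), so $\{\mu:\la\norm{w},\mu\ra\le R\}$ is closed, and your proposed ``closed superset'' $\{\mu:\la\norm{w}\wedge n,\mu\ra\le R\ \forall n\}$ is in fact equal to it, not a proper superset. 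Combined with the Portmanteau theorem for the support condition, $\mathcal{K}_1$ is closed and tight, hence compact by Prokhorov, and no repair is needed.
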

\begin{proof}
For each $L>0$, define $K_L=[-L,L]^{1+d}$.  Then, we have that $K_L$ is a compact  subset of $\mathbb{R}^{1+d}$, and for each $t\geq 0$ and $N\in \mathbb{N}$,
\begin{equation*}
\mathbb{E}\left[\mu^N_t(\mathbb{R}^{1+d}\setminus K_L)\right] = \frac{1}{N}\sum_{i=1}^N \mathbb{P}\left[ |c^i_{\floor*{N t}}|+\parallel w^i_{\floor*{N t}} \parallel \geq L\right] \leq \frac{C}{L}.
\end{equation*}
where we have used Markov's inequality and the bounds from Lemma \ref{CandWbounds}. We define the compact subsets of $\mathcal{M} ( \mathbb{R}^{1+d})$
\begin{equation*}
\hat{K}_L  = \overline{\left\{ \nu:\, \nu(\mathbb{R}^{1+d}\setminus K_{(L+j)^2}) < \frac{1}{\sqrt{L+j}} \textrm{ for all } j\in \mathbb{N}\right\}}
\end{equation*}
and we observe that
\begin{align*}
 \mathbb{P}\left\{ \mu^N_t\not \in \hat{K}_L\right] &\leq \sum_{j=1}^\infty \mathbb{P}\left[ \mu^N_t(\mathbb{R}^{1+d}\setminus K_{(L+j)^2} )> \frac{1}{\sqrt{L+j}}\right]
\leq \sum_{j=1}^\infty \frac{\mathbb{E}[\mu^N_t(\mathbb{R}^{1+d}\setminus K_{(L+j)^2})]}{1/\sqrt{L+j}}\\
&\le \sum_{j=1}^\infty \frac{C}{(L+j)^2/\sqrt{L+j}}
\le \sum_{j=1}^\infty \frac{C}{(L+j)^{3/2}}.
\end{align*}
Given that $\lim_{L\to \infty}\sum_{j=1}^\infty\frac{C}{(L+j)^{3/2}} =0$, we have that, for each $\eta > 0$, there exists a compact set $\hat{K}_L$ such that

\begin{eqnarray}
\sup_{N \in \mathbb{N}, 0 \leq t \leq T} \mathbb{P}[ \mu_t^N \notin \hat{K}_L ] < \frac{\eta}{2}.\nonumber
\end{eqnarray}

Due to Lemma \ref{GLemmaBound} and Markov's inequality, we also know that, for each $\eta > 0$, there exists a compact set $U = [-B, B]^{M}$ such that
\begin{eqnarray}
\sup_{N \in \mathbb{N}, 0 \leq t \leq T} \mathbb{P}[  h_t^N \notin U]  < \frac{\eta}{2}.\nonumber
\end{eqnarray}

Therefore, for each $\eta > 0$, there exists a compact set $ \hat K_L \times [-B, B]^{M} \subset E$ such that
\begin{eqnarray}
\sup_{N \in \mathbb{N}, 0 \leq t \leq T} \mathbb{P}[  (\mu_t^N, h_t^N) \notin \hat K_L \times [-B, B]^{M}]  < \eta.\nonumber
\end{eqnarray}

\end{proof}

\subsection{Regularity} \label{Regularity}

We now establish regularity of the process $\mu^{N}$ in $D_{\mathcal{M}(\mathbb{R}^{1+d})}([0,T])$. Define the function $q(z_{1},z_{2})=\min\{|z_{1}-z_{2}|,1\}$ where $z_{1},z_{2} \in \mathbb{R}$. Let $\mathcal{F}^N_t$ be the $\sigma-$algebra generated by $\{(C_{0}^{i},W_{0}^{i})\}_{i=1}^{N}$ and $\{x_{j}\}_{j=0}^{\floor{Nt}-1}$.

\begin{lemma}\label{MU:regularity}
Let  $f \in C^{2}_{b}(\mathbb{R}^{1+d})$. For any $\delta \in (0,1)$, there is a constant $C<\infty$ such that for $0\leq u\leq \delta$,  $0\leq v\leq \delta\wedge t$, $t\in[0,T]$,
\begin{equation*}
 \mathbb{E}\left[q(\left< f,\mu^N_{t+u}\right>,\left< f,\mu^N_t\right>)q(\left< f,\mu^N_t\right>,\left< f,\mu^N_{t-v}\right>)\big| \mathcal{F}^N_t\right]  \le  C \delta + \frac{C}{N^{3/2}}.
\end{equation*}
%
%
\end{lemma}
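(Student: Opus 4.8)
\textbf{Proof proposal for Lemma \ref{MU:regularity}.}

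The plan is to exploit the key structural feature already derived in equation (\ref{muEvolutionWithRemainderTerms}): the increment of $\la f, \mu_t^N \ra$ over an interval of length $\delta$ is of order $N^{-3/2}$ per step, and there are $O(N\delta)$ steps, so the total increment is of order $\delta$ (plus a lower-order remainder). Concretely, from (\ref{muEvolutionWithRemainderTerms}) we have, for $s \le r$,
\begin{equation*}
\la f, \mu_r^N \ra - \la f, \mu_s^N \ra = N^{-3/2} \sum_{k=\floor*{Ns}}^{\floor*{Nr}-1} \Big( \alpha (y_k - g_k^N(x_k)) \la \partial_c f\, \sigma(w\cdot x_k), \nu_k^N \ra + \alpha (y_k - g_k^N(x_k)) \la c\, \sigma'(w\cdot x_k)\, \nabla_w f \cdot x_k, \nu_k^N \ra \Big) + O_p(N^{-1}).
\end{equation*}
First I would bound each summand in absolute value. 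Since $f \in C_b^2$, the factors $\partial_c f$ and $\nabla_w f$ are bounded; $\sigma, \sigma'$ are bounded by Assumption \ref{A:Assumption1}; $x_k$ ranges over the finite set $\mathcal{D}$ hence $\norm{x_k}$ is bounded; and $|C_k^i| \le C$ by Lemma \ref{CandWbounds}, so $\la c\, \sigma'(w\cdot x_k)\,\nabla_w f\cdot x_k, \nu_k^N\ra$ is bounded uniformly. Thus each summand is bounded by $C(1 + |y_k| + |g_k^N(x_k)|)$, and since the number of steps in $[\floor*{Ns}, \floor*{Nr})$ is at most $N(r-s)+1$, we get
\begin{equation*}
\big| \la f, \mu_r^N \ra - \la f, \mu_s^N \ra \big| \le \frac{C}{N^{1/2}}\cdot \frac{1}{N}\sum_{k=\floor*{Ns}}^{\floor*{Nr}-1}\big(1 + |y_k| + |g_k^N(x_k)|\big) + O_p(N^{-1}) \le C(r-s) + \frac{C}{N^{3/2}} + (\textrm{remainder}).
\end{equation*}
More carefully, I would work with the conditional expectation: taking $\mathbb{E}[\cdot \mid \mathcal{F}_t^N]$ and using Lemma \ref{GLemmaBound} to control $\mathbb{E}[|g_k^N(x_k)|^2 \mid \mathcal{F}_t^N]$ (hence $\mathbb{E}[|g_k^N(x_k)| \mid \mathcal{F}_t^N]$ by Cauchy--Schwarz) together with the compact support of $\pi$, I obtain $\mathbb{E}\big[ \big| \la f, \mu_{t+u}^N \ra - \la f, \mu_t^N \ra \big| \,\big|\, \mathcal{F}_t^N \big] \le Cu + CN^{-3/2} \le C\delta + CN^{-3/2}$, and similarly for the backward increment over $[t-v, t]$, uniformly in $t \in [0,T]$.

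To conclude, I would use that $q(z_1,z_2) = \min\{|z_1-z_2|,1\} \le |z_1-z_2|$, so
\begin{equation*}
q(\la f,\mu_{t+u}^N\ra, \la f,\mu_t^N\ra)\, q(\la f,\mu_t^N\ra, \la f,\mu_{t-v}^N\ra) \le \min\{ |\la f,\mu_{t+u}^N\ra - \la f,\mu_t^N\ra|, 1\}\cdot |\la f,\mu_t^N\ra - \la f,\mu_{t-v}^N\ra|.
\end{equation*}
Here the point of keeping the $\min\{\cdot,1\}$ on the forward factor is that the product is then bounded by the forward increment itself (times $1$) when increments are small, and the truncation makes the whole product bounded by $1$; taking conditional expectation and using the forward bound gives the claimed estimate $C\delta + CN^{-3/2}$. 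One clean way: bound the product by $q(\la f,\mu_{t+u}^N\ra,\la f,\mu_t^N\ra) \le |\la f,\mu_{t+u}^N\ra - \la f,\mu_t^N\ra|$ (dropping the second $q$-factor, which is $\le 1$), then take $\mathbb{E}[\cdot\mid\mathcal{F}_t^N]$ and invoke the forward increment estimate above.

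The main obstacle is the bookkeeping in the remainder term $O_p(N^{-1})$ from (\ref{muEvolutionWithRemainderTerms}): one must verify that its conditional $L^1$ norm is genuinely $O(N^{-1})$ uniformly over sub-intervals, which requires tracking the $O_p(N^{-2})$-per-step error through the Taylor expansion (\ref{NuEvolution1})--(\ref{muEvolutionWithRemainderTerms}) and using the moment bounds of Lemmas \ref{CandWbounds} and \ref{GLemmaBound}; everything else is a uniform-boundedness argument. A secondary point requiring minor care is that the sum $\frac{1}{N}\sum_{k}(1+|y_k|+|g_k^N(x_k)|)$ over an interval of length $\delta$ must be shown to have conditional expectation $\le C\delta$, which again follows from Lemma \ref{GLemmaBound} and the compact support of $\pi$, but should be stated explicitly.
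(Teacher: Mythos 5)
Your overall strategy is sound and in substance parallel to the paper's: express the increment of $\la f,\mu^N_\cdot\ra$ over a sub-interval as a sum of per-step contributions of order $N^{-3/2}$, control the number of steps ($\le N\delta+1$), invoke Lemmas \ref{CandWbounds} and \ref{GLemmaBound} plus compact support of $\pi$ to bound the integrands, and finally drop one $q$-factor (using $q\le 1$) so that only the conditional $L^1$ norm of a single increment is needed. That last reduction is exactly the paper's implicit final step.

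Where you differ is in how the increment is expanded, and this is where your route costs you more bookkeeping than necessary. You re-use (\ref{muEvolutionWithRemainderTerms}), which was obtained from a per-step \emph{second-order} Taylor expansion, and you are therefore forced (as you correctly flag) to re-track the accumulated remainder over the sub-interval rather than over $[0,T]$. The paper instead applies a first-order mean-value expansion of $f$ directly between the two endpoints $(C^i_{\floor*{Ns}},W^i_{\floor*{Ns}})$ and $(C^i_{\floor*{Nt}},W^i_{\floor*{Nt}})$ — see (\ref{Regularity1}) — which produces the bound $\frac{1}{N}\sum_i |\partial_c f|\,|C^i_{\floor*{Nt}}-C^i_{\floor*{Ns}}| + \frac{1}{N}\sum_i \|\nabla_w f\|\,\|W^i_{\floor*{Nt}}-W^i_{\floor*{Ns}}\|$ with no residual error term at all; the per-step telescoping then lives purely inside $|C^i_{\floor*{Nt}}-C^i_{\floor*{Ns}}|$ and $\|W^i_{\floor*{Nt}}-W^i_{\floor*{Ns}}\|$, each of conditional expectation $\le \frac{C}{\sqrt N}\delta + \frac{C}{N^{3/2}}$. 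One further caution on your acknowledged obstacle: verifying that the remainder has conditional $L^1$ norm ``genuinely $O(N^{-1})$'' over a sub-interval is the wrong target, since $N^{-1}$ is \emph{larger} than the $N^{-3/2}$ you need. What actually comes out of tracking the per-step error over $\le N\delta+1$ steps is $O(\delta N^{-1}) + O(N^{-2})$, where the first piece is absorbed into $C\delta$ and the second into $CN^{-3/2}$; you should aim for that decomposition rather than a flat $O(N^{-1})$.
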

\begin{proof}
We start by noticing that a Taylor expansion gives for $0\leq s\leq t\leq T$ \begin{eqnarray}
| \la f , \mu^N_{t} \ra -  \la f , \mu^N_{s} \ra  | &=& | \la f,   \nu^N_{\floor*{N t} } \ra -  \la f,   \nu^N_{\floor*{N s} } \ra | \notag \\
&\leq& \frac{1}{N} \sum_{i=1}^N | f( C^i_{\floor*{N t}}, W^i_{\floor*{N t}}) - f(C^i_{\floor*{N s}}, W^i_{\floor*{N s}}) | \notag \\
&\leq& \frac{1}{N} \sum_{i=1}^N  | \partial_c f (  \bar C^i_{\floor*{N t}}, \bar W^i_{\floor*{N t}} ) | |  C^i_{\floor*{N t}} -  C^i_{\floor*{N s}} | \notag \\
&+& \frac{1}{N} \sum_{i=1}^N  \parallel \nabla_w f (  \hat C^i_{\floor*{N t}}, \hat W^i_{\floor*{N t}} ) \parallel \parallel  W^i_{\floor*{N t}} -  W^i_{\floor*{N s}} \parallel,
\label{Regularity1}
\end{eqnarray}
for points $(\bar C^{i}, \bar W^{i})$ and $(\hat C^{i}, \hat W^{i})$ in the segments connecting $C^i_{\floor*{N s}}$ with $C^i_{\floor*{N t}}$ and  $W^i_{\floor*{N s}}$ with $W^i_{\floor*{N t}}$, respectively.

Let's now establish a bound on $|  C^i_{\floor*{N t}} -  C^i_{\floor*{N s}} |$ for $s < t \leq T$ with $0<t-s\leq \delta<1$.
\begin{eqnarray}
 \mathbb{E} \bigg{[} |  C^i_{\floor*{N t}} -  C^i_{\floor*{N s}} | \bigg| \mathcal{F}^N_s   \bigg{]} &=& \mathbb{E} \bigg{[} | \sum_{k = \floor*{N s } }^{\floor*{N t}-1} ( C_{k+1}^i - C_k^i  ) |  \bigg| \mathcal{F}^N_s   \bigg{]}  \notag \\
&\leq&  \mathbb{E} \bigg{[} \sum_{k = \floor*{N s } }^{\floor*{N t}-1}  | \alpha (y_k - g_{k}^N(x_k) ) \frac{1}{N^{3/2}} \sigma (W^i_k \cdot x_k) |  \bigg| \mathcal{F}^N_s   \bigg{]}  \notag \\
&\leq& \frac{1}{N^{3/2}} \sum_{k = \floor*{N s } }^{\floor*{N t}-1} C  \leq \frac{C}{\sqrt{N}} (t -s ) + \frac{C}{N^{3/2}} \notag \\
&\leq& \frac{C}{\sqrt{N}} \delta + \frac{C}{N^{3/2}},
\label{CregularityBound00}
\end{eqnarray}
where Assumption \ref{A:Assumption1} was used as well as the bounds from Lemmas \ref{CandWbounds} and \ref{GLemmaBound}.

Let's now establish a bound on $\parallel  W^i_{\floor*{N t}} -  W^i_{\floor*{N s}} \parallel$ for $s < t \leq T$ with $0<t-s\leq \delta<1$. We obtain
\begin{eqnarray*}
  \mathbb{E} \bigg{[} \parallel W^i_{\floor*{N t}} -  W^i_{\floor*{N s}} \parallel \bigg| \mathcal{F}^N_s   \bigg{]}  &=&  \mathbb{E} \bigg{[} \parallel \sum_{k = \floor*{N s } }^{\floor*{N t}-1} ( W_{k+1}^i - W_k^i  ) \parallel \bigg| \mathcal{F}^N_s   \bigg{]}   \notag \\
&\leq&   \mathbb{E}  \bigg{[} \sum_{k = \floor*{N s } }^{\floor*{N t}-1} \parallel    \alpha (y_k - g_{k}^N(x_k) ) \frac{1}{N^{3/2}} C^i_k \sigma' (W^i_k \cdot x_k) x_k \parallel \bigg| \mathcal{F}^N_s   \bigg{]}   \notag \\
&\leq&  \frac{1}{N^{3/2}} \sum_{k = \floor*{N s } }^{\floor*{N t}-1} C  \notag \\
& \leq & \frac{C}{\sqrt{N}} (t -s ) + \frac{C}{N} \leq \frac{C}{\sqrt{N}} \delta + \frac{C}{N^{3/2}},
\label{WregularityBound00}
\end{eqnarray*}
where we have again used the bounds from Lemmas \ref{CandWbounds} and \ref{GLemmaBound}.

Now, we return to equation (\ref{Regularity1}). Due to Lemma \ref{CandWbounds}, the quantities $( \bar c^i_{\floor*{N t}}, \bar w^i_{\floor*{N t}} )$ are bounded in expectation for $0 < s < t \leq T$.  Therefore, for $0 < s < t \leq T$ with $0<t-s\leq \delta<1$
\begin{eqnarray*}
\mathbb{E}\left[| \la f , \mu^N_{t} \ra -  \la f , \mu^N_{s} \ra  | \big| \mathcal{F}^N_s \right] \leq C \delta + \frac{C}{N^{3/2}}.
\end{eqnarray*}
where $C<\infty$ is some unimportant constant. Then, the statement of the Lemma follows.
\end{proof}

We next establish regularity of the process $h^{N}_t$ in $D_{\mathbb{R}^M}([0,T])$. For the purposes of the following lemma, let the function $q(z_{1},z_{2})=\min\{ \norm{z_{1}-z_{2}},1\}$ where $z_{1},z_{2} \in \mathbb{R}^M$ and $\norm{z} = |z_1 | + \cdots +  |z_M|$.  \\

\begin{lemma}\label{H:regularity}
For any $\delta \in (0,1)$, there is a constant $C<\infty$ such that for $0\leq u\leq \delta<1$,  $0\leq v\leq \delta\wedge t$, $t\in[0,T]$,

\begin{equation*}
 \mathbb{E}\left[q(h_{t+u}^N, h_t^N )q(h_t^N,h_{t-v}^N )\big| \mathcal{F}^N_t \right]  \le  C \delta + \frac{C}{N}.
\end{equation*}

%
%
\end{lemma}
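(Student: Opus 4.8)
The plan is to follow the same strategy used in the proof of Lemma \ref{MU:regularity}: first establish a one-sided conditional increment bound
\[
\mathbb{E}\left[\norm{h_t^N - h_s^N}\,\big|\,\mathcal{F}^N_s\right] \le C\delta + \frac{C}{N},
\]
valid for $0 \le s < t \le T$ with $0 < t-s \le \delta < 1$, and then deduce the two-sided product estimate. For the second (deduction) step, note that $h^N_{t-v}$ is $\mathcal{F}^N_t$-measurable, so conditioning on $\mathcal{F}^N_t$ and using the tower property together with $q(z_1,z_2)\le \norm{z_1 - z_2}$ reduces the product bound to controlling $\mathbb{E}[q(h^N_{t+u},h^N_t)\,|\,\mathcal{F}^N_t]$ on the "future" factor (which is exactly the one-sided bound above with $s = t$, $t \mapsto t+u$, $\delta \mapsto u \le \delta$), while the "past" factor $q(h^N_t, h^N_{t-v}) \le 1$ is bounded trivially; this is the same bookkeeping as in Lemma \ref{MU:regularity}.

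For the one-sided increment bound I would start from the telescoping identity
\[
h^N_t(x) - h^N_s(x) = \sum_{k=\floor*{Nt}-1}_{k=\floor*{Ns}} \big(g^N_{k+1}(x) - g^N_k(x)\big),
\]
wait — let me instead write it properly as a sum over $k$ from $\floor*{Ns}$ to $\floor*{Nt}-1$ of $g^N_{k+1}(x)-g^N_k(x)$, and plug in the evolution equation (\ref{gEvolution2}). Each increment is $\frac{\alpha}{N}$ times a sum of two inner-product terms plus an $\mathcal{O}(N^{-3/2})$ remainder. Taking absolute values and then conditional expectations, I use: the uniform boundedness of $\sigma$ and $\sigma'$ (Assumption \ref{A:Assumption1}), the compact support of $\pi$ so that $\norm{x_k}$ and $|y_k|$ are bounded, the a priori bounds $|C^i_k| \le C$ from Lemma \ref{CandWbounds}, and the $L^2$ bound $\mathbb{E}[|g^N_k(x_k)|^2] \le C$ from Lemma \ref{GLemmaBound} (hence $\mathbb{E}|y_k - g^N_k(x_k)| \le C$). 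Each summand is therefore bounded in conditional expectation by $\frac{C}{N}$, and there are at most $\floor*{Nt}-\floor*{Ns} \le N(t-s)+1 \le N\delta + 1$ terms, giving
\[
\mathbb{E}\left[|h^N_t(x) - h^N_s(x)|\,\big|\,\mathcal{F}^N_s\right] \le \frac{C}{N}\big(N\delta + 1\big) + \frac{C}{N^{1/2}} \le C\delta + \frac{C}{N}.
\]
Summing over the finitely many $x \in \mathcal{D}$ (there are $M$ of them) absorbs the factor $M$ into the constant and yields the bound for $\norm{\cdot}$.

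One point requiring a little care is that the remainder terms in (\ref{gEvolution2}) are $\mathcal{O}(N^{-3/2})$ \emph{per step}, so summing $\floor*{Nt}-\floor*{Ns}$ of them contributes $\mathcal{O}(N^{-1/2}\delta)$, which is dominated by $C\delta$ — this is why the error term in this lemma is $C/N$ rather than $C/N^{3/2}$ as in Lemma \ref{MU:regularity}; I would note that the difference traces back to the effective learning rate being $\mathcal{O}(N^{-1})$ for $h^N$ versus $\mathcal{O}(N^{-3/2})$ for $\mu^N$. The main (though still routine) obstacle is simply being careful that all the conditional expectations are legitimately bounded uniformly in $k$ — this is guaranteed precisely because Lemmas \ref{CandWbounds} and \ref{GLemmaBound} hold uniformly over $0 \le k \le TN$ — and that the martingale-type cross terms are not an issue here since we are bounding absolute increments directly rather than decomposing into drift and martingale parts.
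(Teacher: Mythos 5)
Your proposal is correct and takes essentially the same route as the paper: telescope $h^N_t - h^N_s$ over at most $N\delta + 1$ SGD steps, bound each per-step increment in conditional expectation by $C/N$ via Assumption \ref{A:Assumption1} and Lemmas \ref{CandWbounds} and \ref{GLemmaBound}, and dispose of the past factor using $q\leq 1$. The only minor divergence is that the paper works from the exact mean-value form of $g^N_{k+1}-g^N_k$ (in terms of $|C^i_{k+1}-C^i_k|$ and $\parallel W^i_{k+1}-W^i_k\parallel$, reusing the per-step parameter-increment bounds already established in the proof of Lemma \ref{MU:regularity}) and so carries no $\mathcal{O}(N^{-3/2})$ remainder, whereas you start from \eqref{gEvolution2} and handle that remainder separately — correctly in substance, aside from the displayed $C/N^{1/2}$ missing the factor of $\delta$ that your surrounding prose correctly supplies.
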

\begin{proof}

Recall that
\begin{eqnarray}
g_{k+1}^N(x)  = g_{k}^N(x)  + \frac{1}{\sqrt{N}} \sum_{i=1}^N  \bigg{(} C^i_{k+1} - C^i_k )  \sigma(W^i_{k+1} \cdot x)   +  \sigma'(W^{i,\ast}_k \cdot x) x\cdot ( W^i_{k+1} - W^i_k )   C^i_k \bigg{)}.\nonumber
\end{eqnarray}

Therefore,
\begin{eqnarray}
h_t^N(x) - h_s^N(x) &=& g_{\floor*{N t}}(x) - g_{\floor*{N s}}(x) \notag \\
&=& \sum_{k= \floor*{N s}}^{\floor*{N t}} ( g_{k+1}^N(x)  -  g_{k}^N(x) ) \notag \\
&=& \sum_{k= \floor*{N s}}^{\floor*{N t}}  \frac{1}{\sqrt{N}} \sum_{i=1}^N  \bigg{(} C^i_{k+1} - C^i_k )  \sigma(W^i_{k+1} \cdot x)   +  \sigma'(W^{i,\ast}_k \cdot x) x\cdot ( W^i_{k+1} - W^i_k )   C^i_k \bigg{)}.\nonumber
\end{eqnarray}

This yields the bound
\begin{eqnarray}
| h_t^N(x) - h_s^N(x)  | &\leq&  \sum_{k= \floor*{N s}}^{\floor*{N t}} |  g_{k+1}^N(x)  -  g_{k}^N(x) | \notag \\
&\leq& \sum_{k= \floor*{N s}}^{\floor*{N t}} \frac{1}{\sqrt{N}} \sum_{i=1}^N  \bigg{(} | C^i_{k+1} - C^i_k |    + \norm{ W^i_{k+1} - W^i_k } \bigg{)},\nonumber
\end{eqnarray}
where we have used the boundedness of $\sigma'(\cdot)$ (from Assumption \ref{A:Assumption1}) and the bounds from Lemma \ref{CandWbounds}.

Taking expectations,
\begin{eqnarray}
\mathbb{E} \bigg{[} | h_t^N(x) - h_s^N(x)  | \bigg{|} \mathcal{F}_s^N \bigg{]} &\leq&   \frac{1}{\sqrt{N}} \sum_{i=1}^N   \sum_{k= \floor*{N s}}^{\floor*{N t}} \mathbb{E} \bigg{[} | C^i_{k+1} - C^i_k |    + \norm{ W^i_{k+1} - W^i_k } \bigg{|} \mathcal{F}_s^N \bigg{]}.\nonumber
\end{eqnarray}

Using the bounds (\ref{CregularityBound00}) and (\ref{WregularityBound00}),
\begin{eqnarray}
\mathbb{E} \bigg{[} | h_t^N(x) - h_s^N(x)  | \bigg{|} \mathcal{F}_s^N \bigg{]} &\leq&   \frac{1}{\sqrt{N}} \sum_{i=1}^N \bigg{(} \frac{C}{\sqrt{N}}(t-s) + \frac{C}{N^{3/2}} \bigg{)} \notag \\
&=&  C (t -s ) + \frac{C}{N}.
\label{GregularityBound00}
\end{eqnarray}

The bound (\ref{GregularityBound00}) holds for each $x \in \mathcal{D}$. Therefore,
\begin{eqnarray}
\mathbb{E} \bigg{[} \norm{h_t^N - h_s^N} \bigg{|} \mathcal{F}_s^N \bigg{]} \leq C (t -s ) + \frac{C}{N}.\nonumber
\end{eqnarray}

The statement of the Lemma then follows.

\end{proof}

\subsection{Combining our results to prove relative compactness} \label{ProofOfRelativeCompactness}

\begin{lemma}\label{L:RelativeCompactness}
The family of processes $\{\mu^N, h^N \}_{N\in\mathbb{N}}$ is relatively compact in $D_{E}([0,T])$.
\end{lemma}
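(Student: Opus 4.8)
The plan is to verify the standard two-part criterion for relative compactness in the Skorokhod space — a compact containment condition together with a control on the conditional modulus of continuity expressed through the function $q$ — separately for the $\mathbb{R}^M$-valued component $\{h^N\}$ and the measure-valued component $\{\mu^N\}$, and then to combine the two marginal statements into joint relative compactness in $D_E([0,T])$.

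First, I would treat $\{h^N\}$ in $D_{\mathbb{R}^M}([0,T])$. Compact containment is the $h^N$-part of Lemma \ref{L:CompactContainment}, and Lemma \ref{H:regularity} supplies precisely the estimate
\begin{equation*}
\mathbb{E}\left[ q(h^N_{t+u}, h^N_t)\, q(h^N_t, h^N_{t-v}) \,\big|\, \mathcal{F}^N_t\right] \le C\delta + \frac{C}{N}
\end{equation*}
required by the relative compactness criterion of Ethier and Kurtz (Chapter 3; the version phrased via the product $q(\cdot,\cdot)q(\cdot,\cdot)$ conditioned on $\mathcal{F}^N_t$). Taking expectations, then $\limsup_{N\to\infty}$, then $\delta\downarrow 0$, annihilates the right-hand side, so $\{h^N\}$ is relatively compact in $D_{\mathbb{R}^M}([0,T])$.

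Next I would handle $\{\mu^N\}$ in $D_{\mathcal{M}(\mathbb{R}^{1+d})}([0,T])$. Since $\mathbb{R}^{1+d}$ is a complete separable metric space, relative compactness of measure-valued càdlàg processes can be reduced to (i) compact containment for $\{\mu^N\}$, which is the $\mu^N$-part of Lemma \ref{L:CompactContainment}, and (ii) relative compactness of the real-valued processes $\{\langle f,\mu^N\rangle\}$ in $D_{\mathbb{R}}([0,T])$ for every $f$ in a class dense enough to generate the weak topology (here $f\in C^2_b(\mathbb{R}^{1+d})$ suffices). Property (ii) follows from Lemma \ref{MU:regularity}, which gives
\begin{equation*}
\mathbb{E}\left[ q(\langle f,\mu^N_{t+u}\rangle,\langle f,\mu^N_t\rangle)\, q(\langle f,\mu^N_t\rangle,\langle f,\mu^N_{t-v}\rangle) \,\big|\, \mathcal{F}^N_t\right] \le C\delta + \frac{C}{N^{3/2}},
\end{equation*}
together with the trivial containment $|\langle f,\mu^N_t\rangle|\le\norm{f}_{\infty}$; one then applies the same Ethier–Kurtz criterion and sends $N\to\infty$ and $\delta\downarrow 0$. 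Hence $\{\mu^N\}$ is relatively compact in $D_{\mathcal{M}(\mathbb{R}^{1+d})}([0,T])$. Finally, relative compactness of the joint family $\{(\mu^N,h^N)\}$ in $D_E([0,T])$, with $E=\mathcal{M}(\mathbb{R}^{1+d})\times\mathbb{R}^M$, follows from that of the two marginals by the standard fact that a sequence of $E_1\times E_2$-valued càdlàg processes is relatively compact in $D_{E_1\times E_2}([0,T])$ as soon as each coordinate sequence is relatively compact in the respective $D_{E_i}([0,T])$.

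The main obstacle is the measure-valued step: one must invoke the correct reduction theorem for $D_{\mathcal{M}(\mathbb{R}^{1+d})}([0,T])$ and check that the test-function class $C^2_b$ appearing in Lemma \ref{MU:regularity} is rich enough, and — crucially — that compact containment (Lemma \ref{L:CompactContainment}) is exactly what upgrades the coordinatewise tightness of $\langle f,\mu^N\rangle$ to tightness of $\mu^N$ itself. The $h^N$ component and the final product step are routine given the lemmas already established, since the estimates $C\delta + C/N$ and $C\delta + C/N^{3/2}$ are designed so that the $N$-dependent error vanishes uniformly and the leading term $C\delta$ is arbitrarily small.
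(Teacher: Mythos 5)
Your argument is correct and mirrors the paper's: both rely on Lemma \ref{L:CompactContainment} for compact containment, Lemmas \ref{MU:regularity} and \ref{H:regularity} for the modulus-of-continuity estimates, Theorem 8.6 of Chapter 3 of Ethier--Kurtz (with Remark 8.7 handling the $C/N$ error via $\limsup_N$ rather than $\sup_N$), and the combination of marginal tightness into joint tightness. You spell out more explicitly the reduction from $\langle f,\mu^N\rangle$-tightness plus compact containment to tightness of the measure-valued process $\mu^N$ itself, which the paper leaves implicit in its citation, but the substance is the same.
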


\begin{proof}
Combining Lemmas \ref{L:CompactContainment} and \ref{MU:regularity}, and Theorem 8.6 of Chapter 3 of \cite{EthierAndKurtz} proves that $\{\mu^N\}_{N\in\mathbb{N}}$ is relatively compact in $D_{\mathcal{M}(\mathbb{R}^{1+d})}([0,T])$. (See also Remark 8.7 B of Chapter 3 of \cite{EthierAndKurtz} regarding replacing $\sup_N$ with $\lim_N$ in the regularity condition B of Theorem 8.6.)

Similarly, combining Lemmas \ref{L:CompactContainment} and \ref{H:regularity} proves that $\{h^N\}_{N\in\mathbb{N}}$ is relatively compact in $D_{\mathbb{R}^{M}}([0,T])$.

Since relative compactness is equivalent to tightness, we have that the probability measures of the family of processes $\{\mu^N\}_{N\in\mathbb{N}}$ are tight. Similarly, we have that the probability measures of the family of process $\{ h^N \}_{N\in\mathbb{N}}$ are tight. Therefore, $\{\mu^N, h^N \}_{N\in\mathbb{N}}$ is tight. Then, $\{\mu^N, h^N \}_{N\in\mathbb{N}}$ is also relatively compact.

\end{proof}


\section{Identification of the Limit} \label{Identification}

Let $\pi^N$ be the probability measure of a convergent subsequence of $\left(\mu^N, h^N \right)_{0\leq t\leq T}$. Each $\pi^N$ takes values in the set of probability measures $\mathcal{M} \big{(} D_E([0,T]) \big{)}$. Relative compactness, proven in Section \ref{RelativeCompactness}, implies that there is a subsequence $\pi^{N_k}$ which weakly converges. We must prove that any limit point $\pi$ of a convergent subsequence $\pi^{N_k}$ will satisfy the evolution equation (\ref{EvolutionEquationIntroductionXavier}). \\
\begin{lemma}
Let $\pi^{N_k}$ be a convergent subsequence with a limit point $\pi$. Then, $\pi$ is a Dirac measure concentrated on $(\mu, h) \in D_E([0,T])$ and $(\mu, h)$ satisfies equation (\ref{EvolutionEquationIntroductionXavier}).
\end{lemma}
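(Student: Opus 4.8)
The plan is the standard test-functional (martingale problem) identification argument. First I would fix a countable family $\{f_j\}_{j\ge 1}\subset C^2_b(\mathbb{R}^{1+d})$ that is convergence determining for $\mathcal{M}(\mathbb{R}^{1+d})$ and, for a path $(\mu,h)\in D_E([0,T])$, introduce a bounded functional $F(\mu,h)$ recording (i) for each $x\in\mathcal D$ the supremum over $t\le T$ of $\big|h_t(x)-h_0(x)-\alpha\int_0^t\!\int_{\mathcal X\times\mathcal Y}(y-h_s(x'))[\la\sigma(w\cdot x')\sigma(w\cdot x),\mu_s\ra+\la c^2\sigma'(w\cdot x')\sigma'(w\cdot x)x^\top x',\mu_s\ra]\pi(dx',dy)\,ds\big|$, i.e.\ the discrepancy from the $h$-equation in (\ref{EvolutionEquationIntroductionXavier}) (keeping $h_0(x)$ inside), and (ii) $\sum_{j}2^{-j}\sup_{t\le T}\big(|\la f_j,\mu_t\ra-\la f_j,\mu_0\ra|\wedge 1\big)$, the whole thing truncated at $1$. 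Evaluated along the pre-limit process, the identities (\ref{hEvolutionWithRemainderTerms}) and (\ref{muEvolutionWithRemainderTerms}) give $F(\mu^N,h^N)\le \sup_{t\le T}\big(|M^{N,1}_t|+|M^{N,2}_t|\big)+\mathcal O(N^{-1/2})$, so the $L^2$-bounds $\mathbb{E}[(M^{N,i}_t)^2]\le K/N$, Doob's maximal inequality applied to the martingales $M^{N,i}$, and the $\mathcal O(N^{-1/2})$ remainder estimates (uniform in $t\le T$ by Lemmas \ref{CandWbounds} and \ref{GLemmaBound}) yield $\mathbb{E}[F(\mu^N,h^N)]\to 0$ as $N\to\infty$.

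Second I would verify that $F$ is bounded and $\pi$-a.s.\ continuous. Boundedness is built in by truncation. For continuity the only difficulty is the Skorokhod topology at jump times, so I would first invoke the regularity Lemmas \ref{MU:regularity} and \ref{H:regularity} — whose bounds $C\delta+\mathcal O(N^{-3/2})$ and $C\delta+\mathcal O(N^{-1})$ vanish on letting $N\to\infty$ and then $\delta\to 0$ — to conclude that every limit point $\pi$ is concentrated on $C_E([0,T])$, i.e.\ $(\mu,h)$ has continuous paths almost surely. On continuous paths $F$ is continuous for the uniform topology: the time integral is handled by dominated convergence using boundedness of $\sigma,\sigma'$, compactness of $\mathcal D$ (so $x^\top x'$ and the $\pi$-integral are bounded), and the a priori bound $\sup_{s\le T}\mathbb{E}|h_s(x)|^2<\infty$ carried over from Lemma \ref{GLemmaBound}; the terms $\la f_j,\mu_t\ra$ are continuous since $f_j\in C^2_b$. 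The continuous mapping theorem and weak convergence $\pi^{N_k}\Rightarrow\pi$ then give $\int F\,d\pi=\lim_k\int F\,d\pi^{N_k}=\lim_k\mathbb{E}[F(\mu^{N_k},h^{N_k})]=0$.

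Third, $\int F\,d\pi=0$ forces $F=0$ $\pi$-a.s. Vanishing of part (ii) gives $\la f_j,\mu_t\ra=\la f_j,\mu_0\ra$ for all $j$ and all $t\in[0,T]$, hence $\mu_t=\mu_0$ for every $t$ by the convergence-determining property; vanishing of part (i) says $h$ solves the integral equation (\ref{LimitEquation2}) with initial value $h_0$. Moreover, since evaluation at $t=0$ is continuous on continuous paths, $h_0$ is the weak limit of $h_0^N$, so $h_0\overset{d}{=}\mathcal G$ by (\ref{Gdefinition}), and likewise $\mu_0$ is the limit of $\mu_0^N$, namely $\mu_0\equiv\mu_0$ in the notation of the excerpt. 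Because (\ref{LimitEquation2}) is the affine ODE $dh_t=A(\hat Y-h_t)\,dt$ in $C_{\mathbb{R}^M}([0,T])$, it has a unique solution for each realization of the Gaussian $\mathcal G=h_0$; thus $(\mu,h)$ is a measurable function of $h_0$, and conditionally on $h_0$ the measure $\pi$ is a Dirac mass carried by that single path $(\mu,h)$ (equivalently, $\pi$ is the push-forward of the law of $\mathcal G$ under the deterministic solution map, so it is the same for every convergent subsequence).

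The step I expect to be the main obstacle is the continuity/tightness bookkeeping in the second paragraph: one must ensure that the nonlinear term $h_s(x')$ inside the drift integral is compatible with passing to the limit — which is precisely why reducing first to continuous limit paths, and then working in the uniform topology, matters — and one must check carefully that the $\mathcal O(N^{-1/2})$ error terms in (\ref{hEvolutionWithRemainderTerms}) and the martingale contributions go to zero in $L^1$ uniformly over $t\le T$, which relies on the a priori moment bounds of Lemmas \ref{CandWbounds} and \ref{GLemmaBound} and on the conditional-independence argument underlying $\mathbb{E}[(M^{N,i}_t)^2]\le K/N$. The identification $h_0\overset{d}{=}\mathcal G$ and the interchange of the limit with the time integral are the remaining, routine, technical points.
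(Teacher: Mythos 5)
Your argument is correct and reaches the same conclusion, but the bookkeeping differs from the paper's in a few substantive ways. The paper's identification functional is built in the classical martingale-problem style: it fixes a time $t$, records the discrepancy at that single $t$, and multiplies by products $\la g_1,\mu_{s_1}\ra\cdots\la g_p,\mu_{s_p}\ra$ and $q_1(h_{s_1})\cdots q_p(h_{s_p})$ over earlier times $s_1<\cdots<s_p\le t$. The $\mathcal{O}(N^{-1/2})$ bound then follows from the $L^2$ estimates $\mathbb{E}[(M^{N,i}_t)^2]\le K/N$ at fixed $t$ plus Cauchy--Schwarz; no maximal inequality is needed. Your functional instead takes a supremum over $t\le T$, which forces you to pay for the sup with Doob's inequality on the martingale part -- a perfectly valid alternative, and arguably the more natural way to conclude that the integral equation holds for \emph{all} $t$ simultaneously rather than at a fixed $t$ and then by density. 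You also fill in two gaps the paper leaves implicit: (i) you reduce to continuous limit paths via Lemmas \ref{MU:regularity} and \ref{H:regularity} before asserting that the identification functional is $\pi$-a.s.\ continuous (the paper simply states $F$ is continuous without addressing Skorokhod continuity at jump times); and (ii) you make precise what ``Dirac measure'' means here -- conditionally on $h_0=\mathcal{G}$ the path $(\mu,h)$ is determined, so $\pi$ is the push-forward of the law of $\mathcal{G}$ through the deterministic solution map of the affine ODE -- whereas the paper's lemma statement uses ``Dirac'' loosely. One small economy you could note: since both you and the paper take absolute values before the expectation, the product test functions / the sup are not strictly needed for the a.s.\ identity at a single $t$ ($\mathbb{E}|D_t|=0$ already gives $D_t=0$ a.s.); their real role is to upgrade from a.s.\ at each fixed $t$ to a.s.\ for all $t$, which your sup achieves directly and the paper achieves by a density-of-times argument that it leaves implicit.
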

\begin{proof}
We define a map $F(\mu,h): D_{E}([0,T]) \rightarrow \mathbb{R}_{+}$ for each $t \in [0,T]$, $f \in C^{2}_{b}(\mathbb{R}^{1+d})$, $g_{1},\cdots,g_{p}\in C_{b}(\mathbb{R}^{1+d})$, $q_{1},\cdots,q_{p}\in C_{b}(\mathbb{R}^{M})$, and $0\leq s_{1}<\cdots< s_{p}\leq t$.
\begin{eqnarray}
F(\mu, h) &=&  \bigg{|} \left(\la f, \mu_t \ra - \la f,  \mu_0 \ra \right) \times\la g_{1},\mu_{s_{1}}\ra\times\cdots\times \la g_{p},\mu_{s_{p}}\ra\bigg{|} \notag \\
&+& \sum_{x \in \mathcal{D}}  \bigg{|} \bigg{(}  h_t(x)  - h_0(x) -  \alpha \int_0^t \int_{\mathcal{X} \times \mathcal{Y}} ( y - h_s(x') ) \la \sigma(w \cdot x ) \sigma(w \cdot x'), \mu_s \ra  \pi(dx',dy) ds \notag \\
&-& \alpha \int_0^t \int_{\mathcal{X} \times \mathcal{Y}} ( y - h_s(x') )   \la c^2  \sigma'(w \cdot x')  \sigma'(w \cdot x) x\cdot x', \mu_s \ra \pi(dx,dy) ds \bigg{)} \times q_{1}(h_{s_{1}})  \times\cdots\times q_{p}(h_{s_{p}}) \bigg{|} \nonumber
\end{eqnarray}

Then, using equations (\ref{hEvolutionWithRemainderTerms}) and (\ref{muEvolutionWithRemainderTerms}), we obtain

\begin{eqnarray}
\mathbb{E}_{\pi^N} [ F(\mu, h ) ] &=& \mathbb{E} [ F( \mu^N, h^N) ] \notag \\
&=& \mathbb{E} \left|  \mathcal{O}_p(N^{-1/2})  \times \prod_{i=1}^{p} \la g_{i},\mu^{N}_{s_{i}}\ra \right|   \notag \\
&+&  \mathbb{E} \left| (  M_t^{N,1} + M_t^{N,2} + \mathcal{O}_p(N^{-1/2}) ) \times \prod_{i=1}^{p} q_{i}( h^{N}_{s_{i}}) \right|   \notag \\.
&\leq& C \bigg{(} \mathbb{E} \bigg{[} | M^{1,N}(t) |^2  \bigg{]}^{\frac{1}{2}}+  \mathbb{E} \bigg{[}| M^{2,N}(t)  |^2  \bigg{]}^{\frac{1}{2} } \bigg{)}+ O(N^{-1/2})   \notag \\
&\leq& C \left(\frac{1}{\sqrt{N}}+\frac{1}{N}\right), \notag
\end{eqnarray}
where we have used the Cauchy-Schwarz inequality.

Therefore,
\begin{eqnarray}
\lim_{N \rightarrow \infty} \mathbb{E}_{\pi^N} [ F(\mu, h) ] = 0.\notag
\end{eqnarray}
Since $F(\cdot)$ is continuous and $F( \mu^N)$ is uniformly bounded (due to the uniform boundedness results of Section \ref{RelativeCompactness}),
\begin{eqnarray}
 \mathbb{E}_{\pi} [ F(\mu, h) ] = 0.\notag
\end{eqnarray}
Since this holds for each $t \in [0,T]$, $f \in C^{2}_{b}(\mathbb{R}^{1+d})$ and $g_{1},\cdots,g_{p}, q_1, \cdots, q_p \in C_{b}(\mathbb{R}^{1+d})$, $(\mu, h)$ satisfies the evolution equation (\ref{EvolutionEquationIntroductionXavier}).
\end{proof}

\section{Proof of Convergence} \label{ProofOfConvergence}
We now combine the previous results of Sections \ref{RelativeCompactness} and \ref{Identification} to prove Theorem \ref{MainTheorem1}. Let $\pi^N$ be the probability measure corresponding to $(\mu^N, h^N)$. Each $\pi^N$ takes values in the set of probability measures $\mathcal{M} \big{(} D_E([0,T]) \big{)}$. Relative compactness, proven in Section \ref{RelativeCompactness}, implies that every subsequence $\pi^{N_k}$ has a further sub-sequence $\pi^{N_{k_m}}$ which weakly converges. Section \ref{Identification} proves that any limit point $\pi$ of $\pi^{N_{k_m}}$ will satisfy the evolution equation (\ref{EvolutionEquationIntroductionXavier}). Equation (\ref{EvolutionEquationIntroductionXavier}) is a finite-dimensional, linear equation and therefore has a unique solution. Therefore, by Prokhorov's Theorem, $\pi^N$ weakly converges to $\pi$, where $\pi$ is the distribution of $(\mu, h)$, the unique solution of (\ref{EvolutionEquationIntroductionXavier}). That is, $(\mu^N, h^N)$ converges in distribution to $(\mu, h)$.

\section{Proof of Corollary \ref{PositiveDefinite} } \label{ProofOFPositiveDefinite}

This section proves that under reasonable hyperparameter choices, the matrix $A$ in the limit equation will be positive definite.

\begin{proof}

We first show that $A$ is equivalent to the covariance matrix of the random variables $U =  \bigg{(} U(x^{(1)}), \ldots,  U(x^{(M)}) \bigg{)}$, which are defined as

\begin{eqnarray}
U(x) &=& \sqrt{ \frac{\alpha}{M} } \sigma(W \cdot x ) +  \sqrt{ \frac{\alpha}{M}  }   C  \sigma'(W \cdot x ) x 
\end{eqnarray}
where $(W, C) \sim \mu_0$ and $x \in \mathcal{D}$. 
%
%
Due to the fact that $C$ is a mean zero random variable and independent of $W$, we have
\begin{eqnarray}
\mathbb{E} \bigg{[} U(x) U(x') \bigg{]} =  \mathbb{E} \bigg{[} \frac{\alpha}{M}  \sigma(W \cdot x )  \sigma(W \cdot x')+   \frac{\alpha}{M}     C^2  \sigma'(W \cdot x )\sigma'(W \cdot x' ) x\cdot x' \bigg{]} = A_{x, x'}.
\end{eqnarray}

To prove that $A$ is positive definite, we need to show that $z^{\top} A z > 0$ for every non-zero $z \in \mathbb{R}^M$.
\begin{eqnarray}
z^{\top} A z &=& z^{\top}  \mathbb{E} \bigg{[} U U^{\top} \bigg{]}  z \notag \\
&=& \mathbb{E} \bigg{[} ( z^{\top} U )^2 \bigg{]} \notag \\
&=& \frac{\alpha}{M} \mathbb{E} \bigg{[} \bigg{(}  \sum_{i=1}^M z_i\left( \sigma(x^{(i)} \cdot W ) +   C  \sigma'(W \cdot x^{(i)} ) x^{(i)}\right)  \bigg{)}^2 \bigg{]}.
\end{eqnarray}

The functions $\sigma(x^{(i)} \cdot W )$ are linearly independent since the $x^{(i)}$ are in district directions (see Remark 3.1 of \cite{yIto}). Therefore, for each non-zero $z$, there exists a point $w^{\ast}$
such that
\begin{eqnarray}
 \sum_{i=1}^M z_i \sigma(x^{(i)} \cdot w^{\ast} )  \neq 0.\nonumber
\end{eqnarray}

Consequently, there exists an $\epsilon > 0$ such that
\begin{eqnarray}
\bigg{(}  \sum_{i=1}^M z_i \sigma(x^{(i)} \cdot w^{\ast} )  \bigg{)}^2 > \epsilon.\nonumber
\end{eqnarray}

Since $\sigma(w\cdot x)+ c\sigma'(w\cdot x)x$ is a continuous function, there exists a set $B = \{ (c,w) : \norm{w - w^{\ast} } +\norm{c}< \eta \}$ for some $\eta > 0$ such that for $(c,w) \in B$
\begin{eqnarray}
\bigg{(}  \sum_{i=1}^M z_i \left(\sigma(x^{(i)} \cdot w )  +   C  \sigma'(W \cdot x^{(i)} ) x^{(i)} \right)\bigg{)}^2 > \frac{\epsilon}{2}.\nonumber
\end{eqnarray}

Then,
\begin{eqnarray}
 \mathbb{E} \bigg{[} \bigg{(}  \sum_{i=1}^M z_i \left(\sigma(x^{(i)} \cdot W )+   C  \sigma'(W \cdot x^{(i)} ) x^{(i)}  \right)\bigg{)}^2 \bigg{]} &\geq&  \mathbb{E} \bigg{[} \bigg{(}  \sum_{i=1}^M z_i \left(\sigma(x^{(i)} \cdot W ) +   C  \sigma'(W \cdot x^{(i)} ) x^{(i)}\right) \bigg{)}^2 \mathbf{1}_{W \in B} \bigg{]} \notag \\
 &\geq& \mathbb{E} \bigg{[} \frac{\epsilon}{2} \mathbf{1}_{(C,W) \in B} \bigg{]} \notag \\
 &=& \frac{\epsilon}{2} K,\nonumber
\end{eqnarray}
for some constant $K > 0$.

Therefore, for every non-zero $z \in \mathbb{R}^M$,
\begin{eqnarray}
z^{\top} A z > 0,\nonumber
\end{eqnarray}
and $A$ is positive definite, concluding the proof of the Corollary.
\end{proof}


\begin{thebibliography}{99}



\bibitem
{yIto}
Yoshifusa Ito.
\newblock Nonlinearity creates linear independence.
\newblock Advances in Computational Mathematics,
\newblock 5: 189-203, 1996.




\bibitem
{Xavier}
X. Glorot and Y. Bengio.
\newblock Understanding the difficulty of training deep feedforward neural networks.
\newblock Proceedings of the thirteenth international conference on artificial intelligence and statistics, pp. 249-256. 2010.

\bibitem
{JasonLee}
S. Du, J. Lee, H. Li, L. Wang, and X. Zhai.
\newblock Gradient Descent Finds Global Minima of Deep Neural Networks.
\newblock Proceedings of the 36th International Conference on Machine Learning, Long Beach, California, PMLR 97, 2019.

\bibitem
{Du1}
S. Du, X. Zhai, B. Poczos, and A. Singh.
\newblock Gradient Descent Provably Optimizes Over-Parameterized Neural Networks.
\newblock ICLR, 2019.

\bibitem
{UCLA2018}
D. Zou, Y. Cao, D. Zhou, and Q. Gu.
\newblock Stochastic Gradient Descent Optimizes Over-parameterized Deep ReLU Networks.
\newblock arXiv: 1811.08888, 2018.

\bibitem
{NTK}
A. Jacot, F. Gabriel, and C. Hongler.
\newblock Neural Tangent Kernel: Convergence and Generalization in Neural Networks.
\newblock 32nd Conference on Neural Information Processing Systems (NeurIPS 2018), Montreal, Canada.


\bibitem
{EthierAndKurtz}
S. Ethier and T. Kurtz.
\newblock Markov Processes: Characterization and Convergence.
\newblock 1986,
\newblock Wiley, New York, MR0838085.




%




\bibitem
{Hornik1}
K. Hornik, M. Stinchcombe, and H. White.
\newblock Multilayer feedforward networks are universal approximators.
\newblock Neural Networks,
\newblock 2(5), 359-366, 1989.

\bibitem
{Hornik2}
K. Hornik.
\newblock Approximation capabilities of multilayer feedforward networks.
\newblock Neural Networks,
\newblock 4(2), 251-257, 1991.


\bibitem
{Hornik3}
C. Kuan and K. Hornik.
\newblock Convergence of learning algorithms with constant learning rates.
\newblock IEEE Transactions on Neural Networks,
\newblock 2(5), 484-489, 1991.



\bibitem
{Montanari}
S. Mei, A. Montanari, and P. Nguyen.
\newblock A mean field view of the landscape of two-layer neural networks
\newblock Proceedings of the National Academy of Sciences, 115 (33) E7665-E767, 2018.

\bibitem
{SirignanoSpiliopoulosNN1}
J. Sirignano and K. Spiliopoulos.
\newblock Mean Field Analysis of Neural Networks: A Law of Large Numbers.
\newblock SIAM Journal on Applied Mathematics,  Vol. 80, Issue 2, pp. 725--752, 2020.

\bibitem
{SirignanoSpiliopoulosNN2}
J. Sirignano and K. Spiliopoulos.
\newblock Mean Field Analysis of Neural Networks: A Central Limit Theorem.
\newblock Stochastic Processes and their Applications, Volume 130, Issue 3, March 2020, pp. 1820-1852, 2020.

\bibitem
{SirignanoSpiliopoulosNN3}
J. Sirignano and K. Spiliopoulos.
\newblock Mean Field Analysis of Deep Neural Networks.
\newblock Mathematics of Operations Research, 2021, to appear.

\bibitem
{RVE}
G. M. Rotskoff and E. Vanden-Eijnden.
\newblock Neural Networks as Interacting Particle Systems: Asymptotic Convexity of the Loss Landscape and Universal Scaling of the Approximation Error.
\newblock arXiv:1805.00915, 2018.






\end{thebibliography}
\end{document}